\newtheorem{thm}{Theorem}[section]
\newtheorem{prop}[thm]{Proposition}
\newtheorem{lem}[thm]{Lemma}
\theoremstyle{remark}
\newtheorem{rem}[thm]{Remark}
\def\les{\lesssim}
\renewcommand{\leq}{\leqslant}
\renewcommand{\geq}{\geqslant}
\newcommand{\B}{\mathbb{B}}
\newcommand{\cG}{\mathscr{G}}
\newcommand{\R}{\mathbb{R}}
\newcommand{\C}{\mathcal{C}}
\newcommand{\Z}{\mathbb{Z}}
\renewcommand{\P}{\mathbb{P}}
\newcommand{\ov}{\overline}
\newcommand{\un}{\underline}
\newcommand{\eps}{\varepsilon}
\def\d{{\mathrm{d}}}
\newcommand{\la}{\langle}
\newcommand{\lla}{\left\langle}
\newcommand{\ra}{\rangle}
\newcommand{\rra}{\right\rangle}
\newcommand{\V}{\mathcal{V}}
\newcommand{\G}{\mathcal{G}}
\numberwithin{equation}{section}
\title[high order correctors and two-scale expansions]{High order correctors and two-scale expansions in stochastic homogenization}
\author{Yu Gu}
\address[Yu Gu]{Department of Mathematics, Building 380, Stanford University, Stanford, CA, 94305, USA}
\begin{document}
\begin{abstract}

In this paper, we study high order correctors in stochastic homogenization. We consider elliptic equations in divergence form on $\Z^d$, with the random coefficients constructed from i.i.d. random variables. We prove moment bounds on the high order correctors and their gradients under dimensional constraints. It implies the existence of stationary correctors and stationary gradients in high dimensions. As an application, we prove a two-scale expansion of the solutions to the random PDE, which identifies the first and higher order random fluctuations in a strong sense.

\bigskip



\noindent \textsc{Keywords:} quantitative homogenization, high order corrector, two-scale expansion, random fluctuation.

\end{abstract}
\maketitle
%
%
%
%
%
%
%
%
%
\section{Main result}

Quantitative stochastic homogenization has witnessed important progress in recent years, and a major contribution of the groundbreaking work of Gloria-Otto is to prove the high order moment estimates on the corrector \cite{gloria2011optimal,gloria2014quantitative}.

The result in the discrete setting can be described as follows. Let $\B$ be the set of nearest neighbor edges in $\Z^d$, and $\{e_i,i=1,\ldots,d\}$ be the canonical basis of $\Z^d$. On a probability space $(\Omega,\mathcal{F},\P)$, we have a sequence of i.i.d. non-degenerate random conductances, denoted by $\{\omega_e\}_{e\in\B}$ with $\omega_e\in (\delta,1)$ for some $\delta>0$. We define $a:\Z^d\to \R^{d\times d}$ as a random diagonal matrix field such that 
\[
a(x)=\mathrm{diag}(a_1(x),\ldots,a_d(x))=\mathrm{diag}(\omega_{(x,x+e_1)},\ldots,\omega_{(x,x+e_d)}).
\] 
The \emph{regularized} corrector equation in the direction $\xi\in\R^d$ says
\begin{equation}
(\lambda+\nabla^* a(x)\nabla) \phi_\xi^\lambda(x)=-\nabla^* a(x)\xi, \  \ x\in \Z^d.
\label{eq:1stcor}
\end{equation}
Here the discrete gradient and divergence for $f:\Z^d\to \R$ and $F:\Z^d\to \R^d$ are defined as
\[
\nabla f=(\nabla_1f,\ldots,\nabla_d f), \ \ \nabla^*F=\sum_{i=1}^d \nabla_i^* F_i,
\]
with 
\[
\nabla_i f(x)=f(x+e_i)-f(x), \ \ \nabla_i^* F_i=F_i(x-e_i)-F_i(x).
\]
It was shown in \cite[Proposition 2.1]{gloria2011optimal} that when $d\geq 3$, for any $p\geq 1$, there exists a constant $C=C(d,\delta,p,\xi)>0$ such that  
\begin{equation}
\la |\phi_\xi^\lambda|^p\ra \leq C
\label{eq:gobound}
\end{equation}
uniformly in $\lambda>0$, where $\la\cdot \ra$ denotes the expectation on $(\Omega,\mathcal{F},\P)$. 
In particular, it implies the existence of a stationary corrector when $d\geq 3$: there exists a zero-mean stationary random field $\phi_\xi$ solving 
\begin{equation}
-\nabla^* a(x)\nabla \phi_\xi(x) =\nabla^* a(x)\xi.
\label{eq:1stcoreq}
\end{equation}

The first goal of this paper is to go beyond the first order correctors, 
and present a proof of \eqref{eq:gobound} for high order correctors. As an application of high order correctors, we identity the first and higher order fluctuations in stochastic homogenization in a strong sense that will be specified later.

\subsection{High order correctors}

We will stay in the same setting but further assume $\{\omega_e\}_{e\in\B}$ satisfies the log-Soblev inequality
\begin{equation}
\left\la \zeta^2 \log \frac{\zeta^2}{\la \zeta^2\ra}\right\ra \leq \frac{1}{\rho}\lla \sum_e |\partial_e \zeta|^2\rra
\label{eq:logsob}
\end{equation}
for some $\rho>0$. Here $\partial_e$ is the weak derivative with respect to $\omega_e$ and $\zeta:\Omega\to \R$ is any function so that the r.h.s. of \eqref{eq:logsob} makes sense. Through a formal two-scale expansion in Section~\ref{s:2scale}, we define the regularized $n-$th order corrector, and denote them by $\psi_n^\lambda$. The following is our first main result.

\begin{thm}
Fix any $n\geq 2$. When $d\geq 2n-1$, for any $p\geq 1$, there exists $C=C(d,\delta,\rho,p)>0$ such that
\[
\la |\nabla \psi_n^\lambda|^p \ra\leq C 
\]
uniformly in $\lambda>0$. When $d\geq 2n+1$, the same result holds for $\psi_n^\lambda$, i.e., 
\[
\la |\psi_n^\lambda|^p \ra\leq C
\]
uniformly in $\lambda>0$. In particular, for the $n-$th order corrector, it has a stationary gradient when $d\geq 2n-1$ and it is stationary when $d\geq 2n+1$.
\label{t:mainTh}
\end{thm}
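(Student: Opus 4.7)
My plan is to proceed by induction on $n$, with base case $n=1$ furnished by the Gloria--Otto bound \eqref{eq:gobound} together with the corresponding estimate on $\nabla\phi_\xi^\lambda$ from \cite{gloria2011optimal,gloria2014quantitative}. At step $n$, the two-scale construction of Section~\ref{s:2scale} provides an equation of the form
\[
(\lambda + \nabla^* a \nabla)\psi_n^\lambda \;=\; \nabla^* F_n^\lambda + h_n^\lambda,
\]
where $F_n^\lambda$ and $h_n^\lambda$ are polynomial in the entries of $a$ and in the lower-order correctors $\psi_k^\lambda,\nabla\psi_k^\lambda$ with $k<n$. Letting $G_\lambda(x,y)$ denote the Green's function of $\lambda+\nabla^* a\nabla$, I would represent $\psi_n^\lambda(x)$ and $\nabla\psi_n^\lambda(x)$ as sums of the source against $G_\lambda(x,y)$ or $\nabla G_\lambda(x,y)$, and then differentiate the equation in $\omega_e$ to obtain a parallel representation for the vertical derivative $\partial_e \psi_n^\lambda(x)$, which picks up, beyond $\partial_e F_n^\lambda$ and $\partial_e h_n^\lambda$, a self-term $-\nabla^* G_\lambda\cdot(\partial_e a)\nabla\psi_n^\lambda$ that is absorbed using the ellipticity lower bound $\delta$.

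Herbst's argument applied to \eqref{eq:logsob} reduces the target $L^p$ bounds on $\psi_n^\lambda$ and $\nabla\psi_n^\lambda$ to controlling $\la(\sum_e|\partial_e\psi_n^\lambda|^2)^{p/2}\ra$ and its gradient counterpart. For these I would use the annealed off-diagonal Green's-function decays $\la|G_\lambda(x,y)|^q\ra^{1/q}\les(1+|x-y|)^{-(d-2)}$ and $\la|\nabla G_\lambda(x,y)|^q\ra^{1/q}\les(1+|x-y|)^{-(d-1)}$ available from the first-order theory, combined with H\"older's inequality and the induction hypothesis on the lower-order correctors that enter $\partial_e F_n^\lambda$ and $\partial_e h_n^\lambda$. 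The dimensional thresholds $d\ge 2n-1$ and $d\ge 2n+1$ then emerge as the convergence conditions for the resulting spatial sums: each induction step forces the source to contain a possibly non-stationary $\psi_{n-1}^\lambda$ whose slow spatial decay costs two extra powers of $|y|$ in the summability exponent, and the gradient representation saves two powers of decay compared with the pointwise one, which is exactly the two-unit gap between the two thresholds.

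The principal obstacle will be the combinatorial and analytic bookkeeping of the source $\nabla^* F_n^\lambda + h_n^\lambda$. Leibniz-differentiating these polynomials produces many summands, each a product of $a,\partial_e a$ and lower-order correctors and their gradients; every summand has to be estimated separately under the induction hypothesis, and I expect the sharp threshold to be saturated by the unique term carrying a bare $\psi_{n-1}^\lambda$. Organizing the induction so that every factor $\psi_k^\lambda$ is absorbed under $d\ge 2k+1$ while factors $\nabla\psi_k^\lambda$ require only $d\ge 2k-1$, and verifying that the worst summand in each case exactly matches the claimed threshold $2n\pm1$, is the technical heart of the proof.
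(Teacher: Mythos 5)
Your outline captures the paper's high-level architecture — induction on $n$, vertical-derivative sensitivity estimates, annealed Green's-function bounds, and a log-Sobolev/spectral-gap step to convert $\sum_e|\partial_e\psi_n^\lambda|^2$ into moment bounds — but it skips over the one genuinely hard structural ingredient, and the way you propose to handle the self-term is not correct as stated.

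The self-term $-\nabla G^\lambda(y,e)\,\nabla\psi_n^\lambda(e)$ in the representation of $\partial_e\psi_n^\lambda(y)$ cannot be ``absorbed using the ellipticity lower bound $\delta$.'' It involves $\nabla\psi_n^\lambda(e)$, which is precisely the unknown quantity the argument is trying to bound, and ellipticity does nothing for you there. The paper's induction step \eqref{eq:psin1}--\eqref{eq:psin2} only closes because the bound $\|\nabla\psi_n^\lambda\|_p\les 1$ is established \emph{beforehand}, and that a priori bound is the technical heart of the whole proof, not a consequence of the sensitivity calculus. Concretely: for $p=2$, one must show the source $F$ in \eqref{eq:corgene} is $\nabla^*\Psi$ for some \emph{stationary} $\Psi$ with $\Psi(0)\in L^q(\Omega)$. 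The first type of source is already in divergence form, but the other two sources are zero-mean stationary fields that are not divergences of anything obvious; the paper constructs the flux $\Psi$ by solving the auxiliary Laplace problem \eqref{eq:plala} on $\Z^d$, estimating $\partial_e\Psi_k^\lambda$ via the constant-coefficient Green's function decay and the inductive hypothesis, and then applying the spectral gap again. This construction is exactly where the lower-order inductive information and the dimensional restriction $d\ge 2n-1$ enter the gradient bound, and your proposal has no substitute for it. For $p>2$, the paper then bootstraps via the inverse H\"older inequality \eqref{eq:inho} together with the quenched bound $\sum_e|\nabla\nabla G^\lambda(b,e)|^2\les 1$; this is the real ``absorption'' step, and it needs the $p=2$ case as input. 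Without these two ingredients — the flux representation for the non-divergence sources and the inverse-H\"older bootstrap — the induction cannot close, and what you describe as bookkeeping is not the principal obstacle.

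Two smaller points. First, the inductive hypothesis you need is on $\mathscr{I}_{n-1}$ \emph{and} $\mathscr{I}_{n-2}$ (two steps back), because the corrector equation at order $n$ involves both $\psi_{n-1}$ and $\psi_{n-2}$; stating ``$k<n$'' glosses over the precise dependence and you would need to verify that the $\psi_{n-2}$ term is controllable at the same threshold. Second, you cite Herbst's argument from log-Sobolev, but the paper actually uses the weaker consequence — the $L^{2p}$ spectral-gap inequality \eqref{eq:psg} — together with the inverse H\"older consequence \eqref{eq:inho}; Herbst would give exponential concentration, which is more than is needed and would not by itself replace the $p=2$ a priori estimate on the gradient.
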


\begin{rem}
The log-Soblev inequality \eqref{eq:logsob} holds for i.i.d. random variables with a continuous density, hence can not deal with distributions with atoms. A weaker version is presented in \cite[Equation (7)]{marahrens2013annealed}, covering all possible i.i.d. ensembles. We believe that the approach in this paper can be applied with the weaker version (with extra technicalities), and Theorem~\ref{t:mainTh} holds for any i.i.d. ensemble. To keep the presentation simple, we choose to work with \eqref{eq:logsob}.
\end{rem}

The study of stochastic homogenization started from the early work of Kozlov \cite{kozlov1979averaging} and Papanicolaou-Varadhan \cite{papanicolaou1979boundary}, and revived recently from various quantitative perspectives \cite{caffarelli2010rates,gloria2011optimal,gloria2012optimal,gloria2013fluctuation,gloria2013quantification,marahrens2013annealed,gloria2014optimal,gloria2014quantitative,gloria2014regularity,gloria2014improved,gloria2015optimal,mourrat2011variance,mourrat2012kantorovich,armstrong1,armstrong2,armstrong3,armstrong4}. While the first order correctors have been analyzed extensively due to their role in determining the effective coefficients and proving convergence in homogenization, the high order correctors have been receiving less attention. 
Our interest in the high order correctors comes from the comparison between a pointwise two-scale expansion and a large scale central limit theorem derived for the solutions to the random PDE
\begin{equation}
-\nabla\cdot a(\frac{x}{\eps})\nabla u_\eps(x)=f(x).
\label{eq:spde}
\end{equation}
The results in \cite{gu-mourrat,gu-mourratmms} showed that when $d\geq 3$, the first order corrector represents the local fluctuation, which is measured by $u_\eps(x)-\la u_\eps(x)\ra$ for fixed $x\in\R^d$, but does not suggest the global large scale fluctuation, which is measured weakly in space by $\int (u_\eps-\la u_\eps\ra) g$ with test function $g$. We expect the surprising phenomenon may be explained by high order correctors which only become visible in the weak sense due to strong correlations; see a discussion in Section~\ref{s:compa}. 

In Section~\ref{s:2scale}, we will construct high order correctors directly from a formal two-scale expansion, and there is a slightly different way of characterizing the high order correctors as the ``high order intrinsic polynomials'' that come out of the Liouville theorem: as the first order corrector $\phi_\xi$ is defined so that $\xi\cdot x+\phi_\xi(x)$ is $a-$harmonic, the high order correctors correct the $a_{\hom}-$harmonic high order polynomials to be $a-$harmonic. It seems the two ways of construction are equivalent although we do not attempt to prove it here. For our purpose, it is more convenient to directly start from the formal expansion. For the recent breakthrough in the direction of regularity theory of random operators in the continuous setting, we refer to \cite{armstrong1,gloria2014regularity,armstrong2,fo,armstrong3,gloria2015optimal,armstrong4}.

%

\medskip

 Our strategy of proving moment estimates in Theorem~\ref{t:mainTh} follows \cite{gloria2011optimal}, i.e., by using a spectral gap inequality (see \eqref{eq:psg} below) and estimating the sensitivity of the correctors to the individual conductance; see also the unpublished work of Naddaf and Spencer \cite{ns}. A key quantity to control is $\partial_e \psi_n^\lambda$, which describes the dependence of the $n-$th order corrector on the conductance $\omega_e$. It involves the first and second order derivatives of the Green's function of $\nabla^*a(x)\nabla$, and we use the $p-$th moment estimates derived in \cite{marahrens2013annealed}, which came from the log-Soblev inequality together with the result of Delmotte and Deuschel on the lower order moment \cite{dd}. One of the difficulties is to obtain some a priori estimate on the gradient $\nabla\psi_n^\lambda$. For the first order corrector, the bound on the second moment of $\nabla \phi_\xi^\lambda$ comes directly from \eqref{eq:1stcor} thanks to the divergence form of its r.h.s. source term. For high order correctors, we prove that the source term can also be expressed in divergence form. 
 Once we have the $p-$th moment estimates on $\nabla\psi_n^\lambda$, Theorem~\ref{t:mainTh} follows from a straightforward induction argument.

\subsection{Two-scale expansions}

As an application of high order correctors, we prove an expansion of solutions to the random PDE \eqref{eq:spde}. If homogenization is viewed as a law of large numbers result, here we are looking for the next order random fluctuations that may or may not lead to a central limit theorem. A classical two-scale expansion indicates that the solutions to \eqref{eq:spde} take the form
\begin{equation}
u_\eps=u_0+\eps u_1+\eps^2 u_2+\ldots,
\label{eq:forex}
\end{equation}
where $u_0,u_1,u_2,\ldots$ are constructed by equating the power-like terms in $\eps$ upon substituting \eqref{eq:forex} into \eqref{eq:spde}. Since \eqref{eq:forex} is only a formal series, it is a priori unclear whether $u_1$ indeed represents the ``correct'' first order fluctuation (it was  shown not true when $d=1$ \cite{Gu2016}), and if it does, we need to understand in which sense \eqref{eq:forex} holds. The second goal of the paper is to give an answer to the above questions and justify the formal two-scale expansion under appropriate dimensional constraint.

To avoid the effects from boundary layers, we work on the equation
\begin{equation}
(\alpha+\nabla_\eps^* a(\frac{x}{\eps})\nabla_\eps) u_\eps(x)=f(x), x\in \eps \Z^d,
\label{eq:heteeq}
\end{equation}
where $\alpha>0$ is a fixed constant and $f(x)\in \C_c^\infty(\R^d)$. The discrete gradient and divergence are defined for $g:\eps\Z^d\to \R$ and $G:\eps\Z^d\to\R^d$ as 
\[
\nabla_\eps g=(\nabla_{\eps,1}g,\ldots,\nabla_{\eps,d}g) \mbox{ with } \nabla_{\eps,i}g(x)=[g(x+\eps e_i)-g(x)]/\eps,
\]
and
\[
\nabla_\eps^* G=\sum_{i=1}^d \nabla_{\eps,i}^* G_i \mbox{ with } \nabla_{\eps,i}^*G_i(x)=[G_i(x-\eps e_i)-G_i(x)]/\eps.
\]

It is well-known that $u_\eps$ converges in a certain sense to $u_{0}$ solving 
\begin{equation}
(\alpha+\nabla_\eps^*a_{\hom}\nabla_\eps) u_0(x)=f(x), x\in \eps \Z^d,
\label{eq:homoeq}
\end{equation}
where the effective coefficient matrix is given by 
\[
a_{\hom}=\la a(\mathrm{I}_d+\nabla\phi)\ra=\bar{a} \mathrm{I}_d \mbox{ with }\nabla\phi=[\nabla\phi_{e_1},\ldots,\nabla\phi_{e_d}],
\]
and $\phi_{e_k}$ is the first order corrector in the direction $e_k$. Our goal is to obtain the first and higher order fluctuations in $u_\eps\to u_0$. By the formal two-scale expansion, the first order correction takes the form 
\[
u_1(x,\frac{x}{\eps})=\sum_{j=1}^d \nabla_{\eps,j}u_0(x)\phi_{e_j}(\frac{x}{\eps}), x\in\eps\Z^d.
\]

\begin{rem}
Typically, one compare $u_\eps$ with $\bar{u}_0$ solving the equation in the continuous space:
\[
(\alpha-\nabla\cdot a_{\hom}\nabla)\bar{u}_0(x)=f(x), x\in\R^d.
\]
In this paper, we do not analyze $u_0-\bar{u}_0$, which is only an error due to discretization.
\end{rem}

For functions $f:\eps\Z^d\times \Omega\to \R^d$, we define the $L^2(\eps\Z^d\times \Omega)$ norm
\begin{equation}
\|f\|_{2,\eps}:=\bigg(\eps^d\sum_{x\in\eps\Z^d} \la |f(x,\omega)|^2\ra\bigg)^{\frac12}.
\end{equation}
The following is our main result for the first order fluctuations.
\begin{thm}
When $d\geq 3$, we have 
\begin{equation}
\|u_\eps-u_0-\eps u_1\|_{2,\eps}=o(\eps)
\label{eq:1stex}
\end{equation}
as $\eps\to 0$.
\label{t:2scale}
\end{thm}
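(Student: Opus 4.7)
The plan is to control the error $w_\eps := u_\eps - u_0 - \eps u_1$ via the standard two-scale energy method. Setting $\tilde u_\eps := u_0 + \eps u_1$, a direct computation shows $(\alpha + \nabla_\eps^* a(x/\eps)\nabla_\eps) w_\eps = -r_\eps$, where $r_\eps := (\alpha + \nabla_\eps^* a(x/\eps)\nabla_\eps)\tilde u_\eps - f$. Testing the equation against $w_\eps$ and using the uniform ellipticity of $a$ yields $\|w_\eps\|_{2,\eps} + \|\nabla_\eps w_\eps\|_{2,\eps} \les \|r_\eps\|_{H^{-1}_\eps}$, reducing everything to bounding this dual norm.

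To compute $r_\eps$, I would expand $\nabla_\eps^* a(x/\eps)\nabla_\eps(u_0 + \eps u_1)$ using the discrete Leibniz rule. The crucial algebraic cancellation is that the $\eps^{-1}$ part of $\eps \nabla_{\eps,i} u_1$ contributes $\nabla_i \phi_{e_j}(x/\eps)$, which combines with $\nabla_{\eps,j} u_0(x)$ to reconstruct the flux $q_{j,i}(y) := a_i(y)[\delta_{ij}+\nabla_i \phi_{e_j}(y)]$; by the first order corrector equation $\nabla^* q_j = 0$, this piece cancels against $\nabla_\eps^* a_{\hom} \nabla_\eps u_0$, leaving schematically
\[
r_\eps = \alpha\eps u_1 + \sum_{i,j}[q_{j,i}(x/\eps - e_i) - \bar a\,\delta_{ij}]\,\nabla_{\eps,i}^*\nabla_{\eps,j} u_0(x) + \eps\nabla_\eps^*[a(x/\eps) R_\eps(x)],
\]
where $R_\eps$ is a bounded combination of $\phi_{e_j}(x/\eps)$, $\nabla_i \phi_{e_j}(x/\eps)$, and second-order differences of $u_0$.

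The middle oscillating term is not yet in divergence form. To put it there I would invoke the flux corrector: since $q_j - \bar a e_j$ is stationary, mean zero, and divergence free, there exists a skew-symmetric random tensor $\sigma_{j,ik}=-\sigma_{j,ki}$ satisfying $q_{j,i} - \bar a\,\delta_{ij} = \sum_k \nabla_k^* \sigma_{j,ik}$, whose $p$-th moments in $d \geq 3$ are controlled by running the Gloria--Otto scheme of \cite{gloria2011optimal} with $\sigma_j$ in place of $\phi_{e_j}$. Substituting and integrating by parts, the skew symmetry in $(i,k)$ annihilates the symmetric second difference $\nabla_{\eps,i}^* \nabla_{\eps,k}$, and the middle term rewrites as $\nabla_\eps^*[\eps\,\sigma_j(x/\eps)\nabla_\eps \nabla_{\eps,j} u_0]$. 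Collecting, $r_\eps = \alpha\eps u_1 + \nabla_\eps^* H_\eps$ with $\|H_\eps\|_{2,\eps} \les \eps(\la|\phi|^2\ra^{1/2}+\la|\sigma|^2\ra^{1/2})\|\nabla^2 u_0\|_{2,\eps}$, the last norm being finite by smoothness of $u_0$ (which solves a constant-coefficient discrete equation with smooth data).

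Plugged into the energy estimate, this already delivers $\|w_\eps\|_{2,\eps} = O(\eps)$. To sharpen to $o(\eps)$, I would add a duality step: testing $w_\eps$ against arbitrary $g \in L^2(\eps\Z^d \times \Omega)$ via the dual equation $(\alpha + \nabla_\eps^* a(x/\eps)\nabla_\eps) v = g$, pairing $r_\eps$ with $v$, and then expanding $v$ by its own first-order two-scale approximation, the leading $O(\eps)$ contributions reduce to pairings of $\eps\phi_{e_j}(x/\eps)$ and $\eps\sigma_j(x/\eps)$ against macroscopic quantities; stationarity, zero mean, and qualitative ergodicity force these pairings to decay strictly faster than $\eps$. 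The hard part will be (i) constructing $\sigma_j$ on the lattice and controlling its moments by the scheme of \cite{gloria2011optimal}, (ii) tracking the shifts $x\mapsto x-\eps e_i$ produced by the discrete product rule so that the skew symmetry of $\sigma_j$ really kills the symmetric part of $\nabla_{\eps,i}^*\nabla_{\eps,k}$, and (iii) upgrading the pointwise stationary moment bounds into the sublinear weak pairing estimates needed for $o(\eps)$ rather than merely $O(\eps)$.
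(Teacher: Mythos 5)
Your proposal takes a genuinely different route from the paper. The paper does \emph{not} stop at the first-order ansatz: it sets $z_\eps = u_\eps - u_0 - \eps u_1 - \eps^2 u_2$, where $u_2$ is built from second-order correctors $\psi_{2,V}$ that (in $d=3,4$) need not be stationary but have zero-mean stationary gradients. The crucial input is the qualitative sublinearity $\la|\eps\psi_{2,V}(x/\eps)|^2\ra \to 0$ from \cite{kunneman}, which makes the residual source $o(\eps)$ \emph{directly} in the energy estimate, rather than merely $O(\eps)$. This is exactly what your sketch does not have: after introducing the flux corrector $\sigma_j$ you get $r_\eps = \alpha\eps u_1 + \nabla_\eps^* H_\eps$ with $\|H_\eps\|_{2,\eps} = O(\eps)$, so the energy method delivers $O(\eps)$ only.

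The gap is in the duality step you invoke to upgrade $O(\eps)$ to $o(\eps)$. You write that ``stationarity, zero mean, and qualitative ergodicity force these pairings to decay strictly faster than $\eps$,'' but this is not what the pairings actually look like once one inserts the two-scale expansion of the dual solution $v_\eps$. The pairings $\la \sigma(\cdot/\eps)\,G, \nabla v_\eps\ra$ and $\la \phi(\cdot/\eps)\,\nabla u_0, v_\eps\ra$ involve products of two oscillating stationary fields (e.g.\ $\sigma(\cdot/\eps)\nabla\phi(\cdot/\eps)$), which are stationary with \emph{nonzero} mean, so they produce an $O(\eps)$ deterministic contribution that the ergodic theorem does not kill. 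The paper handles exactly this contribution separately (``Group IV''): the bias is a linear combination of $\la a_i\phi_{e_j}\ra$, $\la a_k\nabla_k\psi_{2,i,j}\ra$, $\la a_k\nabla_k\tilde\psi_{2,i,j}\ra$, and vanishes thanks to a nontrivial algebraic identity obtained by integrating against $\phi_{e_k}$ in the probability space. Your proposal neither isolates this deterministic piece nor exhibits the cancellation, and it is not plausible that abstract ergodicity alone does the job. (A secondary, but real, issue you yourself flag: the skew symmetry of $\sigma$ does \emph{not} cleanly annihilate $\nabla_{\eps,i}^*\nabla_{\eps,k}^*$ on the lattice, because the discrete product rule introduces shifts $\sigma_{j,ik}(y-e_k)$ vs.\ $\sigma_{j,ik}(y-e_i)$ that break the symmetry; the resulting correction must be estimated, and is not clearly lower order without the gradient bounds on $\sigma$ that the paper proves.)

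To fix the proposal within your own framework, the cleanest repair is the one the paper uses: include $\eps^2 u_2$ in the ansatz, where $u_2$ is built from solutions of $\nabla^* a\nabla\psi_{2,V} = -\nabla_k^*\mathcal V$ with sublinear growth, so that the surviving source in $r_\eps$ carries an extra factor of the sublinearly growing $\eps\psi_{2,V}(\cdot/\eps)$; the $o(\eps)$ then falls out of the energy estimate without any duality. The explicit algebraic vanishing of the order-$\eps$ deterministic bias must then still be checked, as in the paper's computation with $\la a_k\nabla_k\psi_{2,i,j}\ra = \la\phi_{e_k}\,a_i(1_{i=j}+\nabla_i\phi_{e_j})\ra$.
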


\begin{rem}
Since $\la u_1\ra=0$, Theorem~\ref{t:2scale} in particular implies that the deterministic bias $\la u_\eps\ra-u_0$ vanishes in the order of $\eps$.
\end{rem}

For the errors quantified in the strong sense, by the $\|\cdot\|_{2,\eps}$ norm here, Theorem~\ref{t:2scale} shows that the first order correction is given by the first order corrector that comes from the formal expansion. This is consistent with the pointwise result in the continuous setting \cite{gu-mourrat}, and both results suggest that we do not expect a central limit theorem for $(u_\eps(x)-u_0(x))/\eps$. It should be contrasted with the low dimensional case, e.g., when $d=1$, it was shown in \cite{BP-AA-99,Gu2016} that $(u_\eps(x)-u_0(x))/\sqrt{\eps}$ converges in distribution to a Gaussian process. 
For the errors quantified in the weak sense, i.e., after taking a spatial average with a test function, central limit theorems are obtained for the first order corrector $u_1$ and the solution $u_\eps$ \cite{mourrat2014correlation,MN,gu-mourratmms,gloria2016random,armstrong2016scaling}. Various approximations to the effective coefficient matrix $a_{\hom}$ also exhibit Gaussian fluctuations; see \cite{nolen2011normal,biskup2014central,rossignol2012noise,nolen2014normal,GN2014CLT}.

In higher dimensions, a higher order expansion similar to \eqref{eq:1stex} can also be obtained. 
To make sense of the expansion, the number of terms we are permitted to include in the expansion depends on the dimension in light of Theorem~\ref{t:mainTh}.

\begin{thm}
Fix any $n\geq 2$. When $d\geq 2n+1$, there exists $\{u_k\}_{k=1}^n$ and $\{v_k\}_{k=1}^n$ 
such that $u_k$ is random with $\la u_k\ra=0$, $v_k$ is deterministic, 
and
\[
\bigg\|u_\eps-u_0-\sum_{k=1}^n \eps^ku_k-\sum_{k=1}^n \eps^kv_k\bigg\|_{2,\eps}=o(\eps^n)
\]
as $\eps\to 0$.
\label{t:highex}
\end{thm}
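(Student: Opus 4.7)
The plan is to build an ansatz by truncating the formal two-scale expansion of Section~\ref{s:2scale} at order $n$, and then to control the residual via a standard energy estimate that exploits the moment bounds of Theorem~\ref{t:mainTh}.

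I would first set
\[
w_\eps(x) := u_0(x) + \sum_{k=1}^n \eps^k\bigl(u_k(x) + v_k(x)\bigr),
\]
with $u_k(x) = \sum_{|\alpha|=k} \psi_{k,\alpha}(x/\eps)\,D^\alpha_\eps u_0(x)$, where the $\psi_{k,\alpha}$ are the stationary $k$-th order correctors constructed in Section~\ref{s:2scale} and $D^\alpha_\eps$ denotes the appropriate discrete derivative of order $|\alpha|$. The dimensional hypothesis $d\geq 2n+1$ ensures, by Theorem~\ref{t:mainTh}, that every $\psi_{k,\alpha}$ with $k\leq n$ is stationary with $\la\psi_{k,\alpha}\ra=0$ and that all $p$-th moments of $\psi_{k,\alpha}$ and of $\nabla\psi_{k,\alpha}$ are bounded. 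The deterministic correctors $v_k$ are then built recursively as solutions of $(\alpha+\nabla_\eps^*a_{\hom}\nabla_\eps)v_k = g_k$, where $g_k$ is the deterministic (mean) part of the $\eps^k$-order residual produced when one substitutes $u_0 + \sum_{j<k}\eps^j(u_j+v_j)$ into the left-hand side of \eqref{eq:heteeq}. Heuristically, $u_k$ cancels the mean-zero fluctuation at order $\eps^k$, while $v_k$ absorbs the induced deterministic bias (which is precisely the obstruction that prevents a pure random expansion from producing $o(\eps^n)$ in the $\|\cdot\|_{2,\eps}$ norm).

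Setting $r_\eps := u_\eps - w_\eps$, I would expand $(\alpha+\nabla_\eps^*a(x/\eps)\nabla_\eps)w_\eps$ via the discrete Leibniz rule. By construction of the $\psi_{k,\alpha}$ (corrector equations) and of the $v_k$ (homogenized equations), the contributions at orders $\eps^0,\ldots,\eps^n$ cancel exactly, leaving
\[
(\alpha+\nabla_\eps^*a(x/\eps)\nabla_\eps)r_\eps = R_\eps,
\]
where $R_\eps$ consists of terms of the schematic form $\eps^j\,\psi_{k,\alpha}(x/\eps)\,D^\beta u_0(x)$ or $\eps^j\,\nabla\psi_{k,\alpha}(x/\eps)\,D^\beta u_0(x)$ with $j\geq n+1$, together with analogous terms involving the $v_\ell$. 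Writing $R_\eps$ in divergence-plus-potential form and applying the standard energy estimate for \eqref{eq:heteeq} (uniform ellipticity and testing against $r_\eps$) yields $\|r_\eps\|_{2,\eps} \les \|R_\eps\|_{2,\eps}$ up to a constant depending only on $\alpha$ and $\delta$. Stationarity then factors each summand of $\|R_\eps\|_{2,\eps}$ into a moment of $\psi_{k,\alpha}$ or $\nabla\psi_{k,\alpha}$ (bounded by Theorem~\ref{t:mainTh}) times an $L^2$ norm of a smooth compactly supported derivative of $u_0$ or $v_\ell$; this gives $\|R_\eps\|_{2,\eps} = O(\eps^{n+1}) = o(\eps^n)$.

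The main obstacle will be the combinatorial bookkeeping. The discrete Leibniz rule
\[
\nabla_{\eps,i}\bigl[\psi(x/\eps)\phi(x)\bigr] = \tfrac{1}{\eps}(\nabla_i\psi)(x/\eps)\,\phi(x+\eps e_i) + \psi(x/\eps)\,\nabla_{\eps,i}\phi(x)
\]
generates many cross-terms at each order $k\leq n$, and one must set up a careful induction that reassigns each cross-term either to the corrector equation defining some $\psi_{k+1,\alpha}$, to the defining equation of some $v_\ell$, or to the tail $R_\eps$ — and verify that nothing leaks in between. A secondary technicality is to establish enough spatial regularity and decay of the $v_k$ (uniformly in $\eps$) to close the estimate, which follows from discrete elliptic regularity for the constant-coefficient operator $\alpha+\nabla_\eps^*a_{\hom}\nabla_\eps$ applied to the Schwartz-class sources $g_k$ generated from $f\in\C_c^\infty(\R^d)$.
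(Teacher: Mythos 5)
Your ansatz stops at $\eps^n u_n$, but this is not enough: the key missing ingredient is the $(n+1)$-th order corrector, which the paper constructs as a sublinear field with zero-mean stationary gradient (possible precisely because $d\ge 2n+1 = 2(n+1)-1$ by Theorem~\ref{t:mainTh} and \cite[Theorem 3, Lemma 5]{kunneman}). The accounting of which orders cancel is off by two: the corrector equation for $u_k$ zeroes out the coefficient of $\eps^{k-2}$ (cf.\ \eqref{eq:highcorEq}), so $u_1,\dots,u_n$ only eliminate the random source up to order $\eps^{n-2}$. With your truncated ansatz the residual $R_\eps$ carries a zero-mean stationary term at order $\eps^{n-1}$ (the analogue of $J_1+J_2$ in the $n=1$ case), and a plain energy estimate on this term only yields $O(\eps^{n-1})$ for $\|r_\eps\|_{2,\eps}$, far short of $o(\eps^n)$. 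Including $\eps^{n+1}u_{n+1}$ in the ansatz is exactly the device that converts that $\eps^{n-1}$ source into $\eps^n$-order terms involving $\psi_{n+1}$ and $\nabla\psi_{n+1}$.

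Even with the corrected ansatz, the assertion that $\|R_\eps\|_{2,\eps}=O(\eps^{n+1})$ cannot hold: after the $u_{n+1}$ cancellation, $R_\eps$ genuinely has terms of order $\eps^n$ (the analogues of $J_3,K_1,K_2,K_3$ in Section~\ref{s:p2scale}), and these are only $o(\eps^n)$ — not $O(\eps^{n+1})$ — because $\psi_{n+1}$ is merely sublinear, not $L^2$-bounded, when $d=2n+1$. Pushing these down to $o(\eps^n)$ is precisely the content of the Group I/II/III analysis: write the $\eps^n$-order source either with an explicit factor $\eps\psi_{n+1}(x/\eps)$ (sublinearity gives $o(1)$) or in $\nabla^*$-divergence form (gain one factor of $\eps$, then invoke a sublinear potential $H$). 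Finally, your construction of the $v_k$ as one-shot solutions of homogenized equations is also incomplete: the deterministic sources $\la J_3\ra$, $\la K_3\ra$, etc.\ feed into the \emph{random} equation, and solving only the homogenized one introduces a new $O(\eps)$ random error that must itself be expanded. This is why the paper re-applies the whole two-scale machinery to the deterministic sources (producing, for $n=2$, the additional pieces $v_1$, $v_{11}$, $v_2$), and why the $\{u_k\}$ in the statement of Theorem~\ref{t:highex} are not just the formal $u_k$ from \eqref{eq:twoscale} but include contributions from this iteration.
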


\begin{rem}
Theorem~\ref{t:2scale} can be viewed as a special case of Theorem~\ref{t:highex} when $n=1$, in which case we have $v_1\equiv 0$.
\end{rem}

\begin{rem}
It can be seen from the proof of Theorem~\ref{t:highex} that the $k-$th order random fluctuation $u_k$ takes the form 
\[
u_k(x)= \sum_i g_i(x)\varphi_i(\frac{x}{\eps}),
\]
where $g_i$ is some derivative of $u_0$ and $\varphi_i$ is some zero-mean stationary random field. Therefore, the random fluctuations in homogenization, measured in the strong sense, is a superposition of highly oscillatory random fields.
\end{rem}

Using the high order correctors obtained in Theroem~\ref{t:mainTh}, the proof of Theorem~\ref{t:2scale} and Theorem~\ref{t:highex} mimics the periodic setting. It is well-known that in the \emph{ideal} periodic setting (equation with smooth coefficients posed on the whole space), the formal two-scale expansion as in \eqref{eq:forex} indeed approximates the solution up to arbitrary high order precision, so the message we want to convey here is that one can still hope the expansion to be valid in the random setting, 
provided that we have stationary correctors. We refer to \cite{gerard2011homogenization,gerard2012homogenization} for a careful dealing with boundary layers in the periodic setting.


The previous work on estimating the size of $u_\eps-u_0$ includes e.g. \cite{yurinskii1986averaging,conlon1,conlon2}, with non-optimal exponents or optimal exponents in the small ellipticity regime. With the study of the first order correctors, \cite{gloria2013fluctuation,gloria2014optimal,mourrat2012kantorovich} provides optimal estimates of the error size. The recent preprint \cite{BFFO} uses second order correctors to derive quantitative estimates in the weak spatial norms. The main contribution of Theorem~\ref{t:2scale} and Theorem~\ref{t:highex} is to identify the first and higher order fluctuations in the strong sense, which seems to be the first result of this type. 

\medskip
The paper is organized as follows. In Section~\ref{s:pr}, we present the proof of Theorem~\ref{t:mainTh}. 
 The proofs of Theorem~\ref{t:2scale} and Theorem~\ref{t:highex} are in Section~\ref{s:1stex} and Section~\ref{s:high}. We leave some discussions to Section~\ref{s:compa}.

\subsection{Notations}

\begin{itemize}
\item
For any $e=(\underline{e},\bar{e})\in \B$ and $f:\Z^d\to \R$, we will write 
\[
\nabla f(e)=f(\bar{e})-f(\underline{e}),
\]
and for any $\xi\in\R^d$, 
\[
\xi(e)=\xi\cdot e.
\]
\item
For $x\in \Z^d$, we define
\[  
|x|_*=|x|+2.
\]
\item 
We write $a\les b$ when $a\leq Cb$ with the constant $C$ independent of the spatial variables $y,z$ and the edge variables $b,e$.
\item
For $x,y,c>0$, we write 
\[
x\les \frac{1}{y^{c-}}
\]
 if for any $\delta>0$, there exists $C_\delta>0$ such that 
 \[
 x\leq C_\delta \frac{1}{y^{c-\delta}}.
 \]
 \item
For $f:\Z^d\times\Z^d\to \R$, $x\in \Z^d$ and $e,b\in\B$,  we write
\[
\nabla f(x,e)=f(x,\bar{e})-f(x,\un{e}),
\]
and
\[
 \nabla\nabla f(b,e)=\nabla f(\bar{b},e)-\nabla f(\un{b},e).
\]
\item
We use $\|\cdot\|_p$ to denote the $L^p(\Omega)$ norm. Recall that $\|\cdot\|_{2,\eps}$ denote the $L^2(\eps\Z^d\times \Omega)$ norm.
\item We use $G^\lambda,\G^\lambda$ to denote the Green's function of $\lambda+\nabla^*a\nabla, \lambda+\nabla^*\nabla$ on $\Z^d$ and $\cG^\lambda$ to denote the Green's function of $\lambda-\Delta$ on $\R^d$.
\end{itemize}

\section{Moment bounds of high order correctors}
\label{s:pr}

We first review the classical two-scale expansion of equations in divergence form, then present a proof of Theorem~\ref{t:mainTh}.

\subsection{Two-scale expansions} 
\label{s:2scale}

Consider the equation on $\R^d$:
\[
-\nabla\cdot a(\frac{x}{\eps})\nabla u_\eps(x)=f(x),
\]
where $a:\R^d\to \R^{d\times d}$ is a stationary random field. We introduce the fast variable $y=\frac{x}{\eps}$ and write $\nabla=\nabla_x+\frac{1}{\eps}\nabla_y$. By further assuming 
\begin{equation}
u_\eps(x)=u_0(x)+\eps u_1(x,y)+\eps^2 u_2(x,y)+\ldots,
\label{eq:twoscale}
\end{equation}
we obtain
\[
-(\nabla_x+\frac{1}{\eps}\nabla_y)\cdot a(y)(\nabla_x+\frac{1}{\eps}\nabla_y) (u_0+\eps u_1+\ldots+\eps^n u_n+\ldots)=f.
\]
By matching the order of $\eps$, we get equations satisfied by $u_n$.

When $n=1$,
\begin{equation}
\nabla_y\cdot a(y)\nabla_y u_1+\nabla_y\cdot a(y)\nabla_x u_0=0.
\label{eq:1stcorEq}
\end{equation}

When $n\geq 2$,
\begin{equation}
\begin{aligned}
&\nabla_y \cdot a(y)\nabla_y u_n+\nabla_y\cdot a(y)\nabla_x u_{n-1}+\nabla_x \cdot a(y)\nabla_y u_{n-1}+\nabla_x \cdot a(y)\nabla_x u_{n-2}\\
=&\la \nabla_x \cdot a(y)\nabla_y u_{n-1}+\nabla_x \cdot a(y)\nabla_x u_{n-2}\ra.
\label{eq:highcorEq}
\end{aligned}
\end{equation}
Note that in general, the corrector equations take the form 
\[
\nabla_y\cdot a(y)\nabla_y u_n(x,y)=F
\]
and a solvability condition requires that the source satisfies $\la F\ra=0$, so we added some expectations to the r.h.s. of \eqref{eq:highcorEq}. 

For \eqref{eq:1stcorEq}, we can write
\begin{equation}
u_1(x,y)=\sum_{k=1}^d \partial_k u_0(x)\phi_{e_k}(y)=\nabla u_0(x)\cdot \phi(y),
\label{eq:u1}
\end{equation}
with $\phi=[\phi_{e_1},\ldots,\phi_{e_d}]$ and $\phi_{e_k}$ the first order corrector solving
\[
\nabla_y\cdot a(y)(\nabla\phi_{e_k}+e_k)=0.
\]

The equation satisfied by the second order corrector takes the form 
\begin{equation}
\begin{aligned}
\nabla_y \cdot a(y)\nabla_y u_2
=&\nabla_x \cdot a_{\hom}\nabla_x u_0(x)-\nabla_x\cdot  a(y)\nabla\phi(y)\nabla u_0(x)\\
&-\nabla_x\cdot a(y)\nabla_x u_0(x)-\nabla_y\cdot a(y)\nabla^2 u_0(x)\phi(y),
\end{aligned}
\label{eq:u20}
\end{equation}
where $\nabla^2u_0$ is the Hessian matrix of $u_0$ and the matrix $\nabla\phi=[\nabla\phi_{e_1},\ldots,\nabla\phi_{e_d}]$. The r.h.s. of the above expression has mean zero since the homogenization matrix $a_{\hom}$ is given by 
\[
a_{\hom}=\la a(y)(\mathrm{I}_d+\nabla\phi(y))\ra.
\]

By induction, it is clear that $u_n$ is a linear combination of $\partial_\alpha u_0$, where $\alpha$ is a multi-index with $|\alpha|=n$, so we write 
\[
u_n(x,y)=\sum_\alpha \phi_\alpha(y)\partial_\alpha u_0(x),
\]
and those $\phi_\alpha$ are the $n-$th order correctors.

By \eqref{eq:highcorEq}, we further observe that $\phi_\alpha$ with $|\alpha|=n$ is a linear combination of $\psi_n$, which solves the following three types of equations:
\begin{equation}
\nabla\cdot a(y)\nabla \psi_n(y)=\left\{\begin{array}{l}
\partial_i (a_{ij}(y)\psi_{n-1}(y)),\\
a_{ij}(y) \partial_j \psi_{n-1}(y)-\la a_{ij}(y)\partial_j \psi_{n-1}(y)\ra,\\
a_{ij}(y)\psi_{n-2}(y)-\la a_{ij}(y)\psi_{n-2}(y)\ra.
\end{array}
\right.
\label{eq:corn}
\end{equation}
Here $i,j=1,\ldots,d$, and we have the following correspondence between the r.h.s. of \eqref{eq:corn} and \eqref{eq:highcorEq} :
\[
\begin{aligned}
\partial_i (a_{ij}(y)\psi_{n-1}(y)) &\longleftrightarrow \nabla_y\cdot a(y)\nabla_x u_{n-1}(x,y), \\
a_{ij}(y) \partial_j \psi_{n-1}(y)-\la a_{ij}(y)\partial_j \psi_{n-1}(y)\ra &\longleftrightarrow \nabla_x\cdot a(y)\nabla_y u_{n-1}(x,y)-\la\nabla_x\cdot a(y)\nabla_y u_{n-1}(x,y)\ra,\\
a_{ij}(y)\psi_{n-2}(y)-\la a_{ij}(y)\psi_{n-2}(y)\ra &\longleftrightarrow \nabla_x \cdot a(y)\nabla_x u_{n-2}(x,y)-\la\nabla_x \cdot a(y)\nabla_x u_{n-2}(x,y)\ra.
\end{aligned}
\]
In other words, since \eqref{eq:highcorEq} is a linear equation, we have decomposed it into finitely many ``small equations'' written in the generic form of \eqref{eq:corn}.

\subsection{A proof of Theorem~\ref{t:mainTh} by induction}
\label{s:proof}

Recalling that the coefficient $a(x)=\mathrm{diag}(a_1(x),\ldots,a_d(x))$ is a diagonal matrix for $x\in \Z^d$, so given \eqref{eq:corn} in the continuous setting, we consider corrector equations on $\Z^d$ of the following form:
\begin{equation}
\nabla^*a(y)\nabla \psi_n(y)=\left\{\begin{array}{l}
\nabla_i^* (a_i(y)\psi_{n-1}(y)),\\
a_i(y) \nabla_i\psi_{n-1}(y)-\la a_i(y)\nabla_i \psi_{n-1}(y)\ra,\\
a_{i}(y)\psi_{n-2}(y)-\la a_{i}(y)\psi_{n-2}(y)\ra.
\end{array}
\right.
\label{eq:corndis}
\end{equation}

For any $\lambda>0$, we regularize \eqref{eq:corndis} by adding a massive term and define $\psi_n^\lambda$ as the unique solution to 
\begin{equation}
(\lambda+\nabla^*a(y)\nabla) \psi_n^\lambda(y)=\left\{\begin{array}{l}
\nabla_i^* (a_i(y)\psi_{n-1}(y)),\\
a_i(y) \nabla_i\psi_{n-1}(y)-\la a_i(y)\nabla_i \psi_{n-1}(y)\ra,\\
a_{i}(y)\psi_{n-2}(y)-\la a_{i}(y)\psi_{n-2}(y)\ra.
\end{array}
\right.
\label{eq:corndislambda}
\end{equation}
To prove Theorem~\ref{t:mainTh}, we only analyze \eqref{eq:corndislambda} when $d\geq 2n-1$. Since our proof is based on induction, we do not worry about the meaning of  the r.h.s. of \eqref{eq:corndislambda} for the moment.

Let $\psi_1$ denote the stationary first order correctors $\phi_{e_i}$ when $d\geq 3$. We make the statement $\mathscr{I}_n$ for $n\geq 1$:

\medskip

\textbf{Statement $\mathscr{I}_1$}: when $d\geq 3$, for any $p\geq 1$, we have
\begin{equation}
\|\partial_e \psi_1(y)\|_p\les \frac{1}{|y-\un{e}|_*^{d-1}},
\label{eq:eq01}
\end{equation}
 and
 \begin{equation}
\|\partial_e \nabla\psi_1(b)\|_p\les \frac{1}{|\un{b}-\un{e}|_*^{d}},
\label{eq:eq02}
\end{equation}
with the proportional constant independent of $y\in \Z^d$ and $b,e\in\B$.

\textbf{Statement $\mathscr{I}_n, n\geq 2$}: when $d\geq 2n-1$, for any $p\geq 1$, we have
\begin{equation}
\|\partial_e \psi_n^\lambda(y)\|_p\les \frac{1}{|y-\un{e}|_*^{(d-n)-}},
\label{eq:eq1}
\end{equation}
 and
 \begin{equation}
\|\partial_e \nabla\psi_n^\lambda(b)\|_p\les \frac{1}{|\un{b}-\un{e}|_*^{(d-n+1)-}},
\label{eq:eq2}
\end{equation}
with the proportional constant independent of $\lambda>0$, $y\in \Z^d$ and $b,e\in\B$.

\medskip

The following result is the main result in this section.

\begin{prop}
For any $n\geq 2$, $\mathscr{I}_{n-2}+\mathscr{I}_{n-1}$ implies $\mathscr{I}_{n}$, where we take $\mathscr{I}_0$ as an empty statement.
\label{p:induction}
\end{prop}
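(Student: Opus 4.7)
The plan is to follow the spectral-gap + sensitivity strategy of Naddaf--Spencer and Gloria--Otto, applied inductively via differentiation in the conductance $\omega_e$. First I would differentiate \eqref{eq:corndislambda} in $\omega_e$. Since $\partial_e(\nabla^*a\nabla)$ is a rank-one operator supported on the single edge $e$, this produces
\begin{equation*}
(\lambda+\nabla^*a\nabla)\partial_e\psi_n^\lambda
= -\partial_e(\nabla^*a\nabla)\psi_n^\lambda + \partial_e(\mathrm{RHS}_n),
\end{equation*}
where the first source term is localized at $\{\un{e},\bar{e}\}$ with magnitude controlled by $|\nabla\psi_n^\lambda(e)|$, and $\partial_e(\mathrm{RHS}_n)$ is expressed through $\partial_e\psi_{n-1}^\lambda,\partial_e\nabla\psi_{n-1}^\lambda$ (types 1 and 2) or $\partial_e\psi_{n-2}^\lambda$ (type 3), together with $\{e\}$-localized contributions from $\partial_e a$. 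Representing the solution via the quenched Green's function $G^\lambda$ of $\lambda+\nabla^*a\nabla$ yields, schematically,
\begin{equation*}
\partial_e\psi_n^\lambda(y) = -\nabla G^\lambda(y,\cdot)(e)\,\nabla\psi_n^\lambda(e) + \sum_{z\in\Z^d}G^\lambda(y,z)\,\partial_e(\mathrm{RHS}_n)(z),
\end{equation*}
with an analogous identity for $\partial_e\nabla\psi_n^\lambda(b)$ obtained by taking a gradient in $b$ and replacing $G^\lambda,\nabla G^\lambda$ with $\nabla G^\lambda,\nabla\nabla G^\lambda$.

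Next I would plug in the Marahrens--Otto quenched moment bounds
\begin{equation*}
\|\nabla G^\lambda(y,z)\|_p\les |y-z|_*^{-(d-1)},\qquad \|\nabla\nabla G^\lambda(y,z)\|_p\les |y-z|_*^{-d},
\end{equation*}
uniform in $\lambda>0$, together with the inductive hypotheses $\mathscr{I}_{n-1}$ and $\mathscr{I}_{n-2}$. For each of the three source types in \eqref{eq:corndislambda}, H\"older's inequality reduces $\|\partial_e\psi_n^\lambda(y)\|_p$ to a convolution of two decaying kernels, roughly
\begin{equation*}
\sum_{z\in\Z^d}\frac{1}{|y-z|_*^{d-1}}\cdot\frac{1}{|z-\un{e}|_*^{(d-n+1)-}} + \text{lower-order terms},
\end{equation*}
and a Riesz-potential-type convolution bound, valid precisely when $d\geq 2n-1$, returns the required decay $|y-\un{e}|_*^{-(d-n)-}$. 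The sharper estimate \eqref{eq:eq2} on $\partial_e\nabla\psi_n^\lambda$ then follows by running the same scheme with $\nabla G^\lambda,\nabla\nabla G^\lambda$ in place of $G^\lambda,\nabla G^\lambda$, gaining one extra order of spatial decay at every step.

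The hardest part, as flagged in the introduction, will be the a priori $L^p$-control of $\nabla\psi_n^\lambda(e)$ that enters through the operator-derivative source term: without it, the localized contribution at edge $e$ cannot be handled and the induction does not close. For the type 1 right-hand side this is immediate, as the source is already a pure divergence and a standard quenched energy estimate together with $L^p$-bounds on $\psi_{n-1}^\lambda$ (deduced from $\mathscr{I}_{n-1}$ via the log-Sobolev inequality \eqref{eq:logsob}) suffices. For types 2 and 3 the source is not in divergence form, and the key trick is to rewrite it as the divergence of a controlled stationary field plus a harmless remainder; once this is in place, the energy estimate again applies, now using the moment bounds on $\nabla\psi_{n-1}^\lambda$ and $\psi_{n-2}^\lambda$ furnished by $\mathscr{I}_{n-1}$ and $\mathscr{I}_{n-2}$. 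Finally, once the sensitivity bounds $\mathscr{I}_n$ are in hand, a last appeal to \eqref{eq:logsob} upgrades them into moment bounds on $\psi_n^\lambda$ and $\nabla\psi_n^\lambda$ under the dimensional constraints of Theorem~\ref{t:mainTh}, feeding the next induction step.
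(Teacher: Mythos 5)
Your outline matches the paper's strategy in its main steps: differentiate the regularized corrector equation in $\omega_e$, represent $\partial_e\psi_n^\lambda$ and $\partial_e\nabla\psi_n^\lambda$ through the quenched Green's function, insert the annealed Marahrens--Otto estimates for $\nabla G^\lambda$ and $\nabla\nabla G^\lambda$, close by a discrete convolution bound under $d\ge 2n-1$, and correctly isolate the a priori $L^p$-control of $\nabla\psi_n^\lambda$ as the hidden lemma, with the divergence-form rewriting of the source for types 2 and 3 as the key device (exactly the paper's move, producing a stationary field $\Psi$ with $F=\nabla^*\Psi$).

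The gap is in how you propose to close the $L^p$ bound on $\nabla\psi_n^\lambda$. An energy estimate, as you suggest, only yields the $L^2$ bound $\|\nabla\psi_n^\lambda\|_2\les\|\Psi\|_2$; it does not produce moments of order $p>2$. Yet you need $\|\nabla\psi_n^\lambda(e)\|_p\les1$ for every $p$, since the Green's function representation of $\partial_e\nabla\psi_n^\lambda(b)$ contains the term $\nabla\nabla G^\lambda(b,e)\,\nabla\psi_n^\lambda(e)$, which is treated by H\"older. Moreover, the ``last appeal to \eqref{eq:logsob}'' you propose as the upgrade step is circular if it refers to the $p$-version of the spectral gap inequality \eqref{eq:psg} applied directly to $\nabla\psi_n^\lambda$: the vertical derivative $\partial_e\nabla\psi_n^\lambda$ itself carries a factor $\nabla\psi_n^\lambda(e)$, so one would obtain an estimate of $\|\nabla\psi_n^\lambda\|_{2p}$ in terms of itself with a constant that is not obviously less than one. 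The paper breaks this circle by invoking an inverse H\"older inequality from the log-Sobolev inequality (Marahrens--Otto, Lemma~4), which has a freely tunable small parameter $\delta$ in front of the vertical-derivative term. Combined with the quenched summability $\sum_e|\nabla\nabla G^\lambda(b,e)|^2\les1$ and a stationarity argument, one obtains $\lla\bigl(\sum_e|\partial_e\nabla\psi_n^\lambda|^2\bigr)^q\rra\les\la|\nabla\psi_n^\lambda|^{2q}\ra+1$, and the first term is then absorbed by taking $\delta$ small. Without an explicit version of this step, your induction does not close for $p>2$, which is precisely what $\mathscr{I}_n$ requires.
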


First, we note that $\mathscr{I}_1$ holds. For the first order correctors $\psi_1\in\{\phi_{e_k}\}_{k=1,\ldots,d}$, 
\[
\partial_e \phi_{e_k}^\lambda(y)=-\nabla G^\lambda(y,e)(\nabla\phi_{e_k}+e_k)(e),
\]
and
\[
\partial_e \nabla \phi_{e_k}^\lambda(b)=-\nabla\nabla G^\lambda(b,e)(\nabla\phi_{e_k}+e_k)(e),
\]
where $G^\lambda$ is the Green's function of $\lambda+\nabla^*a\nabla$. By \cite[Theorem 1]{marahrens2013annealed}, we have
\begin{equation}
\|\nabla G^\lambda(y,e)\|_p\les \frac{1}{|y-\un{e}|_*^{d-1}} \mbox{ and } \|\nabla\nabla  G^\lambda(b,e)\|_p\les \frac{1}{|\un{b}-\un{e}|_*^{d}},
\label{eq:bdgr}
\end{equation}
which implies 
\begin{equation}
\|\partial_e \phi_{e_k}(y)\|_p\les \frac{1}{|y-\un{e}|_*^{d-1}} \mbox{ and } \|\partial_e \nabla\phi_{e_k}(b)\|_p\les \frac{1}{|\un{b}-\un{e}|_*^d}.
\label{eq:j1}
\end{equation}

By Proposition~\ref{p:induction}, $\mathscr{I}_n$ holds for $n\geq 2$, and this implies Theorem~\ref{t:mainTh}. First, the log-Soblev inequality \eqref{eq:logsob} implies the Spectral-Gap inequality:
\begin{equation}
\la |\zeta|^2\ra\les \lla \sum_e |\partial_e \zeta|^2\rra
\label{eq:sgin}
\end{equation}
for zero-mean $\zeta$, which can be generalized to control the $2p-$th moment for any $p>1$ \cite[Lemma 2]{gloria2013quantification}:
\begin{equation}
\la |\zeta|^{2p}\ra \les \lla \left(\sum_e |\partial_e\zeta|^2\right)^p\rra.
\label{eq:psg}
\end{equation}

\begin{rem}
The result presented in \cite[Lemma 2]{gloria2013quantification} is for a different type of vertical derivative, but the same proof applies in our case.
\end{rem}

Now we apply \eqref{eq:psg} to $\psi_n^\lambda$, and use $\mathscr{I}_n$ to derive
\[
\begin{aligned}
\|\psi_n^\lambda(0)\|_{2p}\les \lla\left(\sum_e |\partial_e\psi_n^\lambda(0)|^2\right)^p\rra^{\frac{1}{2p}} \les &\left(\sum_e \|\partial_e \psi_n^\lambda(0)\|_{2p}^2\right)^{\frac12}\\
\les & \left(\sum_e  \frac{1}{|\un{e}|_*^{(2d-2n)-}}\right)^{\frac12}\les 1
\end{aligned}
\]
when $d\geq 2n+1$.

Similarly, we apply \eqref{eq:psg} to $\nabla\psi_n^\lambda$, and use $\mathscr{I}_{n}$ to derive
\[
\begin{aligned}
\|\nabla\psi_n^\lambda(b)\|_{2p}\les \lla\left(\sum_e |\partial_e\nabla \psi_n^\lambda(b)|^2\right)^p\rra^{\frac{1}{2p}} \les &\left(\sum_e \|\partial_e \nabla\psi_n^\lambda(b)\|_{2p}^2\right)^{\frac12}\\
\les & \left(\sum_e  \frac{1}{|\un{b}-\un{e}|_*^{(2d-2n+2)-}}\right)^{\frac12}\les 1
\end{aligned}
\]
when $d\geq 2n-1$.

By the uniform bound on $\|\psi_n^\lambda\|_{2p}$ and $\|\nabla\psi_n^\lambda(b)\|_{2p}$, it is standard to extract a subsequence as $\lambda\to 0$ to get the existence of a stationary corrector $\psi_n$ when $d\geq 2n+1$ and the existence of a stationary gradient $\nabla\psi_n$ when $d\geq 2n-1$. For the convenience of reader, we present the details in Appendix~\ref{s:exst}.

\begin{rem}
It is clear that if $\psi_n^\lambda(0)$ converges to $\psi_n(0)$ weakly in $L^p(\Omega)$, then $\partial_e\psi_n^\lambda(0)$ converges to $\partial_e\psi_n(0)$ weakly in $L^p(\Omega)$, thus the same estimates in $\mathscr{I}_n$ holds for $\psi_n$ when $d\geq 2n+1$. The same discussion applies to $\nabla\psi_n$.
\label{r:bscor}
\end{rem}

The proof of Proposition~\ref{p:induction} is a straightforward calculation once we assume the high moment bounds for $\nabla\psi_{n}^\lambda$: \begin{equation}
\mathscr{I}_{n-2}+\mathscr{I}_{n-1}\Rightarrow \mbox{ when } d\geq 2n-1,
\mbox{ for any } p\geq 1, 
 \|\nabla \psi_{n}^\lambda(b)\|_p\les 1.
\label{eq:mmgra}
\end{equation}
We will leave the proof of 
$\eqref{eq:mmgra}$ to the next section.



\medskip

\emph{Proof of Proposition~\ref{p:induction}.} By the previous discussion, when $d\geq 2n-1$, we have stationary $(n-2)-$th and $(n-1)-$th order corrector, so the equation of $\psi_n^\lambda$ given by \eqref{eq:corndislambda} is well-defined:
\[
(\lambda+\nabla^*a(y)\nabla) \psi_n^\lambda(y)=\left\{\begin{array}{l}
\nabla_i^* (a_i(y)\psi_{n-1}(y)),\\
a_i(y) \nabla_i\psi_{n-1}(y)-\la a_i(y)\nabla_i \psi_{n-1}(y)\ra,\\
a_{i}(y)\psi_{n-2}(y)-\la a_{i}(y)\psi_{n-2}(y)\ra.
\end{array}
\right.
\]
We take $\partial_e$ on both sides to obtain
\[
(\lambda+\nabla^* a(y)\nabla) \partial_e \psi_n^\lambda(y)=(1_{y=\un{e}}-1_{y=\bar{e}})\nabla\psi_n^\lambda(e)+\left\{\begin{array}{l}
\nabla_i^* [\partial_e(a_i(y)\psi_{n-1}(y))],\\
\partial_e(a_i(y) \nabla_i\psi_{n-1}(y)),\\
\partial_e(a_{i}(y)\psi_{n-2}(y)).
\end{array}
\right.
\]
By the Green's function representation, we write
\begin{equation}
\partial_e \psi_n^\lambda(y)=-\nabla G^\lambda(y,e)\nabla\psi_n^\lambda(e)+\left\{\begin{array}{l}
\sum_{z}\nabla_i G^\lambda(y,z) [\partial_e(a_i(z)\psi_{n-1}(z))],\\
\sum_z G^\lambda(y,z)\partial_e(a_i(z) \nabla_i\psi_{n-1}(z)),\\
\sum_z G^\lambda(y,z)\partial_e(a_{i}(z)\psi_{n-2}(z)),
\end{array}
\right.
\label{eq:depsin}
\end{equation}
which implies that for any $p\geq 1$:
\[
\|\partial_e \psi_n^\lambda(y)\|_p\leq \|\nabla G^\lambda(y,e)\nabla\psi_n^\lambda(e)\|_p+\left\{\begin{array}{l}
\sum_{z}\|\nabla_i G^\lambda(y,z) [\partial_e(a_i(z)\psi_{n-1}(z))]\|_p,\\
\sum_z \|G^\lambda(y,z)\partial_e(a_i(z) \nabla_i\psi_{n-1}(z))\|_p,\\
\sum_z \|G^\lambda(y,z)\partial_e(a_{i}(z)\psi_{n-2}(z))\|_p.
\end{array}
\right.
\]
%

Since $\partial_e a_i(z)=0$ for $e\neq (z,z+e_i)$, it is clear by $\mathscr{I}_{n-2},\mathscr{I}_{n-1}$ and Remark~\ref{r:bscor} that 
\[
\begin{aligned}
\|\partial_e(a_i(z)\psi_{n-1}(z))\|_p &\les \frac{1}{|z-\un{e}|_*^{(d-n+1)-}},\\
\|\partial_e(a_i(z)\nabla_j\psi_{n-1}(z))\|_p &\les \frac{1}{|z-\un{e}|_*^{(d-n+2)-}},\\
\|\partial_e(a_i(z)\psi_{n-2}(z))\|_p &\les \frac{1}{|z-\un{e}|_*^{(d-n+2)-}}.
\end{aligned}
\]
 Note that when $n=2$, $\psi_{n-2}(z)$ is a constant as can be seen from \eqref{eq:u20}, so the above estimates still holds. An application of H\"older inequality together with \eqref{eq:bdgr} and \eqref{eq:mmgra} leads to 
\[
\|\partial_e \psi_n^\lambda(y)\|_p\les \frac{1}{|y-\un{e}|_*^{d-1}}+\left\{\begin{array}{l}
\sum_{z}\frac{1}{|y-z|_*^{d-1}}\frac{1}{|z-\un{e}|_*^{(d-n+1)-}},\\
\sum_z \frac{1}{|y-z|_*^{d-2}}\frac{1}{|z-\un{e}|_*^{(d-n+2)-}},\\
\sum_z\frac{1}{|y-z|_*^{d-2}}\frac{1}{|z-\un{e}|_*^{(d-n+2)-}}.
\end{array}
\right.
\]
By the discrete convolution inequality \cite[Lemma A.6]{gu-mourratmms}, we obtain
\begin{equation}
\|\partial_e \psi_n^\lambda(y)\|_p\les \frac{1}{|y-\un{e}|_*^{(d-n)-}}.
\label{eq:psin1}
\end{equation}

By \eqref{eq:depsin}, we also have
\begin{equation}
\partial_e \nabla \psi_n^\lambda (b)=-\nabla \nabla G^\lambda (b,e)\nabla\psi_n^\lambda(e)+\left\{\begin{array}{l}
\sum_{z}\nabla \nabla_i G^\lambda(b,z) [\partial_e(a_i(z)\psi_{n-1}(z))],\\
\sum_z \nabla G^\lambda(b,z)\partial_e(a_i(z) \nabla_i\psi_{n-1}(z)),\\
\sum_z \nabla G^\lambda(b,z)\partial_e(a_{i}(z)\psi_{n-2}(z)),
\end{array}
\right.
\label{eq:degrpsin}
\end{equation}
so the same discussion as above gives 
\begin{equation}
\|\partial_e \nabla \psi_n^\lambda(b)\|_p\les \frac{1}{|\un{b}-\un{e}|_*^{(d-n+1)-}}.
\label{eq:psin2}
\end{equation}
The statement $\mathscr{I}_n$ consists of \eqref{eq:psin1} and \eqref{eq:psin2}, so the proof of Proposition~\ref{p:induction} is complete.

\begin{rem}
The reason we choose to have $(d-n)-$ and $(d-n+1)-$ as the exponents in \eqref{eq:eq1} and \eqref{eq:eq2} is due to the following discrete convolution inequality: for any $\alpha\in (0,d)$,
\begin{equation}
\sum_z \frac{1}{|z|_*^\alpha}\frac{1}{|y-z|_*^d} \les \frac{\log |y|_*}{|y|_*^\alpha}\les \frac{1}{|y|_*^{\alpha-}}.
\label{eq:dcon}
\end{equation}
As can be seen from \eqref{eq:degrpsin}, the term $\nabla \nabla_i G^\lambda (b,z)$ produces a factor of $|\un{b}-z|_*^{-d}$, and we use ``$-$'' to absorb the logarithmic factor when applying \eqref{eq:dcon}.
\end{rem}

\subsection{Estimates on gradient of correctors: proof of \eqref{eq:mmgra}} 
\label{s:boundg}


To prove 
\[
\|\nabla\psi_n^\lambda(b)\|_p\les 1
\]
 for any $p\geq 1$, we first consider $p=2$, then derive it for any $p>2$.

\subsubsection{$p=2$.} 
\label{s:q2}
We write the equation \eqref{eq:corndislambda} as
\begin{equation}
(\lambda+\nabla^* a(y)\nabla)\psi_n^\lambda(y)=F(y)=\left\{\begin{array}{l}
\nabla_i^* (a_i(y)\psi_{n-1}(y)),\\
a_i(y) \nabla_i\psi_{n-1}(y)-\la a_i(y)\nabla_i \psi_{n-1}(y)\ra,\\
a_{i}(y)\psi_{n-2}(y)-\la a_{i}(y)\psi_{n-2}(y)\ra.
\end{array}
\right.
\label{eq:corgene}
\end{equation}
By $\mathscr{I}_{n-2}+\mathscr{I}_{n-1}$, when $d\geq 2n-1$, $F$ is a zero-mean stationary random field, and
\[
\|\psi_{n-1}\|_q+\|\nabla \psi_{n-1}\|_q+\|\psi_{n-2}\|_q\les1
\]
for any $q\geq 1$.
We claim that there exists a stationary random field $\Psi$ such that $\Psi(0)\in L^q(\Omega)$ for any $q\geq 1$ and 
\begin{equation}
\nabla^* \Psi=F.
\label{eq:dipsif}
\end{equation}
Using the fact $a_i> \delta>0$, we conclude from \eqref{eq:dipsif} that
\[
\|\nabla\psi_n^\lambda\|_2\les \|\Psi\|_2\les 1.
\]

For $F=\nabla_i^* (a_i(y)\psi_{n-1}(y))$, we only need to choose 
\[
\Psi=\Psi_ie_i \mbox{ with }\Psi_i=a_i\psi_{n-1}.
\]

For the other two cases, we consider the equation with $k=1,\ldots,d$
\begin{equation}
(\lambda+\nabla^*\nabla)\Psi_k^\lambda=\nabla_k F,
\label{eq:plala}
\end{equation}
and by $\mathscr{I}_{n-2}+\mathscr{I}_{n-1}$, we have
\[
\begin{aligned}
\|\partial_e \Psi_k^\lambda(y)\|_q\leq \sum_z \|\nabla_k^* \ov{G^\lambda}(y,z)\partial_eF(z)\|_q\les& \sum_z \frac{1}{|y-z|_*^{d-1}}\|\partial_e F(z)\|_q\\
\les &\sum_z \frac{1}{|y-z|_*^{d-1}}\frac{1}{|z-\un{e}|_*^{(d-n+2)-}}\\
\les &\frac{1}{|y-\un{e}|_*^{(d-n+1)-}},
\end{aligned}
\]
where we used $\ov{G^\lambda}$ to denote the Green's function of $\lambda+\nabla^*\nabla$. Thus, by the same application of the spectral gap estimate \eqref{eq:psg}, we have 
\[
\|\Psi_k^\lambda\|_q\les 1
\]
when $d\geq 2n-1$. By Lemma~\ref{l:stacor}, there exists a stationary random field $\Psi_k(y)$ such that $\Psi_k(0)\in L^q(\Omega)$ and $\Psi_k^\lambda(0) \to \Psi_k(0)$ weakly in $L^q(\Omega)$. Let $\Psi=(\Psi_1,\ldots,\Psi_d)$, and we claim 
\[
\nabla^*\Psi=F.
\]
From \eqref{eq:plala}, we have 
\[
(\lambda+\nabla^*\nabla)\nabla_k^*\Psi_k^\lambda=\nabla_k^*\nabla_kF,
\]
so 
\[
(\lambda+\nabla^*\nabla)\nabla^*\Psi^\lambda=\nabla^*\nabla F
\]
by summing over $k$. Let $\lambda\to 0$, it is clear that 
\[
\nabla^*\nabla (\nabla^*\Psi-F)=0.
\]
By ergodicity, we have $\nabla^*\Psi-F$ is a constant, but since it has mean zero, we conclude $\nabla^*\Psi=F$. The proof is complete for $p=2$.

\subsubsection{$p>2$.} 
\label{s:qg2}
By \cite[Lemma 4]{marahrens2013annealed}, the log-Soblev inequality implies a type of inverse H\"older inequality: fix any $q>1$, for any $\delta>0$, there exists $C_\delta>0$ such that
\begin{equation}
\la |\xi|^{2q}\rangle^{\frac{1}{2q}}\leq C_\delta\langle |\xi|\rangle+\delta\lla \left(\sum_e |\partial_e \xi|^2\right)^{q}\rra^{\frac{1}{2q}}.
\label{eq:inho}
\end{equation}

\begin{rem}
The result in \cite[Lemma 4]{marahrens2013annealed} is for a different type of derivative, but the same proof applies in our case.
\end{rem}

In view of \eqref{eq:inho} and the bound on $\|\nabla\psi_n^\lambda\|_2$, in order to bound $\|\nabla\psi_n^\lambda\|_q$ for any $q>1$, it suffices to prove 
\begin{equation}
\lla \left(\sum_e |\partial_e \nabla\psi_n^\lambda|^2\right)^q\rra^{\frac{1}{2q}}\les \la |\nabla\psi_n^\lambda|^{2q}\rangle^{\frac{1}{2q}}+1,
\label{eq:deto}
\end{equation}
and choose $\delta$ sufficiently small. By recalling \eqref{eq:degrpsin}, we have
\begin{equation}
|\partial_e \nabla\psi_n^\lambda(b)|^2\les |\nabla\nabla G^\lambda(b,e)\nabla\psi_n^\lambda(e)|^2+I_e^2,
\label{eq:decom}
\end{equation}
where 
\[
I_e:=\left\{\begin{array}{l}
\sum_{z}\nabla \nabla_i G^\lambda(b,z) [\partial_e(a_i(z)\psi_{n-1}(z))],\\
\sum_z \nabla G^\lambda(b,z)\partial_e(a_i(z) \nabla_i\psi_{n-1}(z)),\\
\sum_z \nabla G^\lambda(b,z)\partial_e(a_{i}(z)\psi_{n-2}(z)).
\end{array}
\right.
\]
We first have
\[
\lla \left(\sum_e I_e^2\right)^q\rra\les \left(\sum_e \|I_e\|_{2q}^2\right)^q\les 1
\]
by $\mathscr{I}_{n-2}+\mathscr{I}_{n-1}$ and the fact that $d\geq 2n-1$. 
For the first term on the r.h.s. of \eqref{eq:decom}, we have 
\[
\begin{aligned}
 \left(\sum_e|\nabla\nabla G^\lambda(b,e)\nabla\psi_n^\lambda(e)|^2\right)^q  \les &\left(\sum_e |\nabla\nabla G^\lambda(b,e)|^2\right)^{q-1} \sum_e |\nabla\nabla G^\lambda(b,e)|^2|\nabla\psi_n^\lambda(e)|^{2q}\\
 \les &\sum_e |\nabla\nabla G^\lambda(b,e)|^2|\nabla\psi_n^\lambda(e)|^{2q},
 \end{aligned}
\]
where we used the quenched bound \cite[Equation (39)]{marahrens2013annealed}
\begin{equation}
\sum_e |\nabla\nabla G^\lambda(b,e)|^2\les 1.
\label{eq:qubd}
\end{equation}  
Thus we have
\[
\lla \left(\sum_e|\nabla\nabla G^\lambda(b,e)\nabla\psi_n^\lambda(e)|^2\right)^q \rra \les\sum_e\la |\nabla\nabla G^\lambda(b,e)|^2|\nabla\psi_n^\lambda(e)|^{2q}\ra.
\] 
By stationarity we can write 
\[
\la |\nabla\nabla G^\lambda(b,e)|^2|\nabla\psi_n^\lambda(e)|^{2q}\ra=\la |\nabla\nabla G^\lambda(b-\un{e},e-\un{e})|^2|\nabla\psi_n^\lambda(e-\un{e})|^{2q}\ra,
\]
so by considering the summation over edges of the same direction, $\nabla\psi_n^\lambda(e-\un{e})$ is unchanged over $e$, and using \eqref{eq:qubd} again we have 
\[
\sum_e\la |\nabla\nabla G^\lambda(b,e)|^2|\nabla\psi_n^\lambda(e)|^{2q}\ra
\les \la |\nabla\psi_n^\lambda|^{2q}\ra.
\]
To summarize, we have proved that
\[
\lla \left(\sum_e |\partial_e \nabla\psi_n^\lambda|^2\right)^q\rra\les  \langle |\nabla\psi_n^\lambda|^{2q}\ra+1,
\]
which leads to \eqref{eq:deto} and completes the proof.

\section{First order fluctuations when $d\geq 3$}
\label{s:1stex}

The proof of Theorem~\ref{t:2scale} follows the ideas of \cite[Section 6]{papanicolaou1979boundary}, which itself is analogous to the periodic case. Since the first order correctors are used to prove the convergence of $u_\eps\to u_0$, it is natural to consider using the second order correctors to obtain the first order fluctuations. 

We define the remainder in the expansion as
\begin{equation}
z_\eps(x)=u_\eps(x)-u_0(x)-\eps u_1(x,y)-\eps^2 u_2(x,y),
\label{eq:remainder}
\end{equation}
where $u_1,u_2$ will be constructed by the first and second order correctors later. Since $d\geq 3$, we have stationary first order correctors, so $\eps u_1\sim O(\eps)$. We do not necessarily have stationary second order correctors (by Theorem~\ref{t:mainTh}, we need at least $d=5$), but they have zero-mean stationary gradients, so we can expect the second order correctors to grow sublinearly, which implies $|\eps^2u_2(x,y)|\ll \eps$. The rest is to analyze $z_\eps$.

We first construct $u_1$ and $u_2$ for our purpose. Then we prove $\eps^2 u_2$ and $z_\eps$ are both of order $o(\eps)$. 

\subsection{Construction of $u_1,u_2$}
\label{s:conu1u2}

Similar to the continuous setting, we introduce the fast variable $y=x/\eps\in \Z^d$ when $x\in \eps\Z^d$. In the discrete setting, the Leibniz rule is different, and for any function $f:\eps\Z^d\to \R$ and $g:\Z^d\to \R$, we have
\[
\begin{aligned}
\nabla_{\eps,i}(f(x)g(y))=&[f(x+\eps e_i)g(y+e_i)-f(x)g(y)]/\eps\\
=&\left\{\begin{array}{l}
\nabla_{\eps,i}f(x) g(y)+f(x+\eps e_i)\frac{1}{\eps}\nabla_ig(y),\\
\nabla_{\eps,i}f(x)g(y+e_i)+f(x)\frac{1}{\eps}\nabla_ig(y),
\end{array}
\right.
\end{aligned}
\]
and
\[
\begin{aligned}
\nabla_{\eps,i}^*(f(x)g(y))=&[f(x-\eps e_i)g(y-e_i)-f(x)g(y)]/\eps\\
=&\left\{\begin{array}{l}
\nabla_{\eps,i}^*f(x) g(y)+f(x-\eps e_i)\frac{1}{\eps}\nabla_i^*g(y),\\
\nabla_{\eps,i}^*f(x)g(y-e_i)+f(x)\frac{1}{\eps}\nabla_i^*g(y).
\end{array}
\right.
\end{aligned}
\]
There are two different expressions for the Leibniz rule, and for our purpose, we choose the one that does not change the microscopic variable $y$:
\begin{equation}
\begin{aligned}
\nabla_{\eps,i}(f(x)g(y))&=&\nabla_{\eps,i}f(x) g(y)+f(x+\eps e_i)\frac{1}{\eps}\nabla_ig(y),\\
\nabla_{\eps,i}^*(f(x)g(y))&=&\nabla_{\eps,i}^*f(x) g(y)+f(x-\eps e_i)\frac{1}{\eps}\nabla_i^*g(y).
\label{eq:disle}
\end{aligned}
\end{equation}

We construct $u_1,u_2$ by expanding $\nabla_\eps^*a(y)\nabla_\eps(u_0+\eps u_1+\eps^2u_2)$ with \eqref{eq:disle} and equating the like power of $\eps$, which is similar to what we presented in Section~\ref{s:2scale}. Due to the lattice effect, the expressions of $u_1,u_2$ will be different from the continuous setting, so we present the details of the calculations here.

We first have
\begin{equation}
\nabla_\eps^*a(y) \nabla_\eps u_0(x)=\sum_{i=1}^d a_i(y)\nabla_{\eps,i}^*\nabla_{\eps,i} u_0(x)+\sum_{i=1}^d\frac{1}{\eps}\nabla_i^*a_i(y)\nabla_{\eps,i}u_0(x-\eps e_i),
\label{eq:nau0}
\end{equation}
and we define 
\begin{equation}
u_1(x,y)=\sum_{j=1}^d \nabla_{\eps,j}u_0(x-\eps e_j) \phi_{e_j}(y),
\label{eq:u1}
\end{equation}
where we recall that $\phi_{e_j}$ is the first order corrector in the direction of $e_j$, which is a mean-zero stationary random field when $d\geq 3$ and satisfies 
\[
\nabla^*a(y)(\nabla\phi_{e_j}(y)+e_j)=0.
\]

Next, we have
\[
\begin{aligned}
\nabla_\eps^*a(y)\nabla_\eps(\eps u_1(x,y))=&\sum_{i,j=1}^d \eps \nabla_{\eps,i}^* a_i(y)\nabla_{\eps,i}(\nabla_{\eps,j}u_0(x-\eps e_j)\phi_{e_j}(y))\\
=&\sum_{i,j=1}^d \eps\nabla_{\eps,i}^*a_i(y)(\nabla_{\eps,i}\nabla_{\eps,j}u_0(x-\eps e_j) \phi_{e_j}(y))\\
+&\sum_{i,j=1}^d \nabla_{\eps,i}^* a_i(y)(\nabla_{\eps,j}u_0(x-\eps e_j+\eps e_i)\nabla_{i}\phi_{e_j}(y)):=I_1+I_2.
\end{aligned}
\]
For $I_1$, we have
\[
\begin{aligned}
I_1=&\sum_{i,j=1}^d \eps \nabla_{\eps,i}^*\nabla_{\eps,i}\nabla_{\eps,j}u_0(x-\eps e_j)a_i(y)\phi_{e_j}(y)\\
+&\sum_{i,j=1}^d \nabla_{\eps,i}\nabla_{\eps,j}u_0(x-\eps e_j-\eps e_i)\nabla_i^*(a_i(y)\phi_{e_j}(y)).
\end{aligned}
\]
For $I_2$, we have
\[
\begin{aligned}
I_2=&\sum_{i,j=1}^d \nabla_{\eps,i}^* \nabla_{\eps,j}u_0(x-\eps e_j+\eps e_i)a_i(y)\nabla_i\phi_{e_j}(y)\\
+&\sum_{i,j=1}^d \frac{1}{\eps}\nabla_{\eps,j}u_0(x-\eps e_j) \nabla_i^*(a_i(y)\nabla_i\phi_{e_j}(y)).
\end{aligned}
\]
By the equation satisfied by $\phi_{e_j}$, we have the second term on the r.h.s. of the above display equals to 
\[
\sum_{i,j=1}^d \frac{1}{\eps}\nabla_{\eps,j}u_0(x-\eps e_j) \nabla_i^*(a_i(y)\nabla_i\phi_{e_j}(y))=-\frac{1}{\eps}\sum_{j=1}^d \nabla_{\eps,j}u_0(x-\eps e_j)\nabla^* a(y)e_j,
\]
which cancels the second term on the r.h.s. of \eqref{eq:nau0}.

Therefore, we have 
\[
\begin{aligned}
&\nabla_\eps^* a(y)\nabla_\eps (u_0(x)+\eps u_1(x,y))-\nabla_\eps^*a_{\hom}\nabla_\eps u_0(x)\\
&=\sum_{i=1}^d (a_i(y)-\bar{a})\nabla_{\eps,i}^*\nabla_{\eps,i} u_0(x)\\
&+\sum_{i,j=1}^d \eps \nabla_{\eps,i}^*\nabla_{\eps,i}\nabla_{\eps,j}u_0(x-\eps e_j)a_i(y)\phi_{e_j}(y)\\
&+\sum_{i,j=1}^d \nabla_{\eps,i}\nabla_{\eps,j}u_0(x-\eps e_j-\eps e_i)\nabla_i^*(a_i(y)\phi_{e_j}(y))\\
&+\sum_{i,j=1}^d \nabla_{\eps,i}^* \nabla_{\eps,j}u_0(x-\eps e_j+\eps e_i)a_i(y)\nabla_i\phi_{e_j}(y):=J_1+J_2+J_3,
\end{aligned}
\]
where 
\[
J_1=\sum_{i,j=1}^d \nabla_{\eps,i}^*\nabla_{\eps,j}u_0(x-\eps e_j+\eps e_i)[a_i(y)(1_{i=j}+\nabla_i\phi_{e_j}(y))-\bar{a}1_{i=j}],
\]
\[
J_2=\sum_{i,j=1}^d \nabla_{\eps,i}\nabla_{\eps,j}u_0(x-\eps e_j-\eps e_i)\nabla_i^*(a_i(y)\phi_{e_j}(y)),
\]
and
\[
J_3=\sum_{i,j=1}^d \eps \nabla_{\eps,i}^*\nabla_{\eps,i}\nabla_{\eps,j}u_0(x-\eps e_j)a_i(y)\phi_{e_j}(y).
\]
Since $\la a(\mathrm{I}_d+\nabla\phi)\ra=\bar{a} \mathrm{I}_d$ and $\phi_{e_j}$ is stationary, we have $\la J_1\ra=\la J_2\ra=0$. 

On one hand, we define the second correctors $\psi_{2,i,j}$ and $\tilde{\psi}_{2,i,j}$ by
\begin{equation}
\nabla^* a\nabla\psi_{2,i,j}=-[a_i(1_{i=j}+\nabla_i\phi_{e_j})-\bar{a}1_{i=j}]
\label{eq:2ndcor1}
\end{equation}
and
\begin{equation}
\nabla^* a\nabla\tilde{\psi}_{2,i,j}=-\nabla_i^*(a_i\phi_{e_j}),
\label{eq:2ndcor2}
\end{equation}
then $u_2(x,y)$ is given by 
\[
\begin{aligned}
u_2(x,y)=
&\sum_{i,j=1}^d \nabla_{\eps,i}^*\nabla_{\eps,j} u_0(x-\eps e_j+\eps e_i)\psi_{2,i,j}(y)\\
+&\sum_{i,j=1}^d \nabla_{\eps,i}\nabla_{\eps,j}u_0(x-\eps e_j-\eps e_i) \tilde{\psi}_{2,i,j}(y).
\end{aligned}
\]

On the other hand, by the discussion in Section~\ref{s:q2}, we can write 
\[
a_i(y)(1_{i=j}+\nabla_i\phi_{e_j}(y))-\bar{a}1_{i=j}=\nabla^* \sigma_{ij}(y)
\]
for some stationary random field $\sigma_{ij}(y)$ with $\sigma_{ij}(0)\in L^2(\Omega)$ (this was first proved in \cite{gloria2014regularity,gu-mourratmms}). Thus $J_1+J_2$ is a linear combination of terms of the form $F_V(x)V(y)$, where $F_V$ is a second derivative of $u_0$, $V=\nabla_k^* \mathcal{V}$ for some $k=1,\ldots,d$ and stationary random field $\V(y)$ with $\V(0)\in L^2(\Omega)$. Thus we can write the equation for the second order corrector corresponding to $V$ as
\begin{equation}
\nabla^*a(y)\nabla \psi_{2,V}(y)=-V(y)=-\nabla_k^* \V(y).
\label{eq:u2}
\end{equation}
By \cite[Theorem 3, Lemma 5]{kunneman}, we can construct solutions to \eqref{eq:u2} such that 
\begin{itemize}
\item $\psi_{2,V}(0)=0$,
\item $\nabla\psi_{2,V}(y)$ is a zero-mean stationary random field and $\nabla\psi_{2,V}(0)\in L^2(\Omega)$,
\item $\psi_{2,V}(y)$ grows sublinearly in the following sense:
\begin{equation}
\la |\eps \psi_{2,V}(\frac{x}{\eps})|^2\ra \to 0
\label{eq:sublinear}
\end{equation}
as $\eps\to 0$ for all $x\in \R^d$, where we extend $\psi_2$ from $\Z^d$ to $\R^d$ such that $\psi_{2,V}(x)=\psi_{2,V}([x])$ with $[x]$ the integer part of $x\in\R^d$. Furthermore, we have 
\begin{equation}
\la |\psi_{2,V}(x)|^2 \ra \les 1+|x|^2.
\label{eq:subes}
\end{equation}

\end{itemize}


To simplify the notations, we write 
\[
J_1+J_2=\sum_VF_V(x)V(y),
\]
where the summation is over 
\[
V\in \{a_i(1_{i=j}+\nabla_i\phi_{e_j})-\bar{a}1_{i=j}, \nabla_i^*(a_i\phi_{e_j})\}_{i,j=1,\ldots,d},
\]
and write
\begin{equation}
u_2(x,y)=\sum_V F_V(x)\psi_{2,V}(y).
\label{eq:u2ex}
\end{equation}

By a similar calculation as before, we have
\begin{equation}
\begin{aligned}
&\nabla_\eps^* a(y)\nabla_\eps  (\eps^2\sum_VF_V(x)\psi_{2,V}(y))\\
&=\eps^2\sum_V \sum_{i=1}^d \nabla_{\eps,i}^* \nabla_{\eps,i}F_V(x) a_i(y)\psi_{2,V}(y)\\
&+\eps \sum_V\sum_{i=1}^d\nabla_{\eps,i} F_V(x-\eps e_i)\nabla_i^*(a_i(y)\psi_{2,V}(y))\\
&+\eps \sum_V\sum_{i=1}^d \nabla_{\eps,i}^* F_V(x+\eps e_i)a_i(y)\nabla_i\psi_{2,V}(y)\\
&+\sum_V\sum_{i=1}^d  F_V(x) \nabla_i^*(a_i(y)\nabla_i \psi_{2,V}(y)):=K_1+K_2+K_3+K_4.
\end{aligned}
\label{eq:boca}
\end{equation}
By \eqref{eq:u2}, we have $J_1+J_2+ K_4=0$, which implies
\[
\begin{aligned}
&\nabla_\eps^* a(y)\nabla_\eps (u_0(x)+\eps u_1(x,y)+\eps^2 u_2(x,y))-\nabla_\eps^*a_{\hom}\nabla_\eps u_0(x)\\
=&J_3+K_1+K_2+K_3.
\end{aligned}
\]

To summarize, with $u_1,u_2$ given by \eqref{eq:u1} and \eqref{eq:u2ex}, we can write the equation satisfied by $z_\eps=u_\eps-u_0-\eps u_1-\eps^2u_2$ as
\begin{equation}
\begin{aligned}
&(\alpha+\nabla_\eps^*a(\frac{x}{\eps})\nabla_\eps)z_\eps\\
=&f-\alpha u_0-\eps\alpha u_1-\eps^2\alpha u_2-\nabla_\eps^*a(\frac{x}{\eps})\nabla_\eps(u_0+\eps u_1+\eps^2u_2)\\
=&-\eps\alpha u_1-\eps^2\alpha u_2-J_3-K_1-K_2-K_3,
\end{aligned}
\label{eq:zeps}
\end{equation}
where we recall
\begin{eqnarray}
J_3&=&\eps\sum_{i,j=1}^d  \nabla_{\eps,i}^*\nabla_{\eps,i}\nabla_{\eps,j}u_0(x-\eps e_j)a_i(y)\phi_{e_j}(y),\\
K_1&=&\eps^2\sum_V \sum_{i=1}^d \nabla_{\eps,i}^* \nabla_{\eps,i}F_V(x) a_i(y)\psi_{2,V}(y),\\
K_2&=&\eps \sum_V\sum_{i=1}^d\nabla_{\eps,i} F_V(x-\eps e_i)\nabla_i^*(a_i(y)\psi_{2,V}(y)),\\
K_3&=&\eps \sum_V\sum_{i=1}^d \nabla_{\eps,i}^* F_V(x+\eps e_i)a_i(y)\nabla_i\psi_{2,V}(y).
\end{eqnarray}


\subsection{Proof of Theorem~\ref{t:2scale}}
\label{s:p2scale}

The goal of this section is to analyze $z_\eps$ through \eqref{eq:zeps}. Since the equation is linear, we can deal with different sources term separately. We write 
\[
(\alpha+\nabla_\eps^* a(\frac{x}{\eps})\nabla_\eps) w_\eps =W,
\]
with $W$ belonging to one of the following four groups.

\begin{itemize}
\item
\emph{Group I.} $W(x,y)=\eps^2 F_1(x)\psi_{2,V}(y)$. We have $\eps^2\alpha u_2,K_1$ belonging to this group, and the key feature is the factor of $\eps^2 \psi_{2,V}(y)$.
\item
\emph{Group II.} $W(x,y)=\eps F_1(x)\nabla_i^*(a_i(y)\psi_{2,V}(y))$, which includes $K_2$.
\item
\emph{Group III.} $W(x,y)=\eps F_1(x)F_2(y)$ with $F_2$ some stationary zero-mean random field and $F_2(0)\in L^2(\Omega)$. We have $\eps\alpha u_1$, $J_3-\la J_3\ra$ and $K_3-\la K_3\ra$ belonging to group III.
\item
\emph{Group IV.} $W(x,y)=\eps F_1(x)$ for some deterministic function $F_1$, which includes $\la J_3\ra$ and $\la K_3\ra$.
\end{itemize}

Recalling that $F_V$ are second order derivatives of $u_0$ taking the form of 
\[
\nabla_{\eps,i}^*\nabla_{\eps,j} u_0(x-\eps e_j+\eps e_i) \mbox{ or } \nabla_{\eps,i}\nabla_{\eps,j}u_0(x-\eps e_j-\eps e_i),
\]
by Lemma~\ref{l:de}, the function $F_1(x)$ appearing above in all groups can be bounded by $e^{-c|x|}$ for some $c>0$.


\subsubsection{Group I} We consider $w_\eps$ satisfying 
\[
(\alpha+\nabla_\eps^*a(\frac{x}{\eps})\nabla_\eps)w_\eps=\eps^2 F_1(x)\psi_{2,V}(y),
\]
with $|F_1(x)|^2\les e^{-c|x|}$, and a simple energy estimate together Cauchy-Schwartz inequality gives
\[
\begin{aligned}
\sum_{x\in\eps\Z^d} \la |w_\eps(x)|^2\ra \les& \eps^2 \sum_{x\in\eps\Z^d} \la |F_1(x)\psi_{2,V}(\frac{x}{\eps}) w_\eps(x)|\ra\\
\les &\eps^2 \sqrt{ \sum_{x\in\eps\Z^d} \la |F_1(x)\psi_{2,V}(\frac{x}{\eps})|^2\ra}\sqrt{\sum_{x\in\eps\Z^d}\la |w_\eps(x)|^2\ra},
\end{aligned}
\]
so we have
\[
\begin{aligned}
\eps^d \sum_{x\in\eps\Z^d}\la |w_\eps(x)|^2\ra\les& \eps^2\eps^d \sum_{x\in\eps\Z^d} \la |F_1(x)\eps\psi_{2,V}(\frac{x}{\eps})|^2\ra\\
\les &\eps^2\eps^d\sum_{x\in\eps\Z^d}  e^{-c|x|} \la |\eps \psi_{2,V}(\frac{x}{\eps})|^2\ra=\eps^2o(1),
\end{aligned}
\]
where the last step comes from \eqref{eq:sublinear} and \eqref{eq:subes}. Thus we have $\|w_\eps\|_{2,\eps}=o(\eps)$.

\subsubsection{Group II} We consider $w_\eps$ satisfying 
\[
(\alpha+\nabla_\eps^*a(\frac{x}{\eps})\nabla_\eps)w_\eps=\eps F_1(x)\nabla_i^*(a_i(y)\psi_{2,V}(y)),
\]
and use energy estimate to obtain
\[
\begin{aligned}
\sum_{x\in\eps\Z^d} ( \la |w_\eps(x)|^2\ra+ \la |\nabla_\eps w_\eps|^2\ra)\les& \eps |\la \sum_{x\in\eps\Z^d} F_1(x)\nabla_i^*(a_i(y)\psi_{2,V}(y))w_\eps(x)\ra |\\
=&\eps^2 |\la \sum_{x\in\eps\Z^d} F_1(x)\nabla_{\eps,i}^*(a_i(\frac{x}{\eps})\psi_{2,V}(\frac{x}{\eps}))w_\eps(x) \ra|.
\end{aligned}
\]
An integration by parts leads to 
\[
\begin{aligned}
\sum_{x\in\eps\Z^d} ( \la |w_\eps(x)|^2\ra+ \la |\nabla_\eps w_\eps|^2\ra)\les &\eps^2\sum_{x\in\eps\Z^d}\la |\nabla_{\eps,i}(F_1(x)w_\eps(x))a_i(\frac{x}{\eps})\psi_{2,V}(\frac{x}{\eps})|\ra\\
\les &\eps^2 \sum_{x\in\eps\Z^d} \la  |\nabla_{\eps,i}F_1(x) w_\eps(x)a_i(\frac{x}{\eps})\psi_{2,V}(\frac{x}{\eps})|\ra\\
+& \eps^2 \sum_{x\in\eps\Z^d} \la  |F_1(x+\eps e_i) \nabla_{\eps,i}w_\eps(x)a_i(\frac{x}{\eps})\psi_{2,V}(\frac{x}{\eps})|\ra.
\end{aligned}
\]
By Lemma~\ref{l:de}, $|\nabla_{\eps,i}F_1(x)|^2+|F_1(x+\eps e_i)|^2\les e^{-c|x|}$, so by Cauchy-Schwartz inequality we have
\[
\begin{aligned}
&\sum_{x\in\eps\Z^d} ( \la |w_\eps(x)|^2\ra+ \la |\nabla_\eps w_\eps|^2\ra)\\
\les& \eps^2\sqrt{\sum_{x\in\eps\Z^d}e^{-c|x|}\la |\psi_{2,V}(\frac{x}{\eps})|^2\ra} \left(\sqrt{\sum_{x\in\eps\Z^d}\la |w_\eps(x)|^2\ra}+\sqrt{\sum_{x\in\eps\Z^d}\la |\nabla_{\eps,i}w_\eps(x)|^2\ra}
\right)\\
\les &\eps^2\sqrt{\sum_{x\in\eps\Z^d}e^{-c|x|}\la |\psi_{2,V}(\frac{x}{\eps})|^2\ra} \sqrt{\sum_{x\in\eps\Z^d} ( \la |w_\eps(x)|^2\ra+ \la |\nabla_\eps w_\eps|^2\ra)},
\end{aligned}
\]
thus
\[
\eps^d\sum_{x\in\eps\Z^d} ( \la |w_\eps(x)|^2\ra+ \la |\nabla_\eps w_\eps|^2\ra)\les \eps^2 \eps^d\sum_{x\in\eps\Z^d}e^{-c|x|}\la |\eps\psi_{2,V}(\frac{x}{\eps})|^2\ra=\eps^2o(1),
\]
and we conclude $
\|w_\eps\|_{2,\eps}=o(\eps)$.

\subsubsection{Group III} We consider $z_\eps$ satisfying 
\[
(\alpha+\nabla_\eps^*a(\frac{x}{\eps})\nabla_\eps)w_\eps=\eps F_1(x)F_2(y),
\]
where $|F_1(x)|\les e^{-c|x|}$ and $F_2$ is a zero-mean stationary random field with $F_2(0)\in L^2(\Omega)$. 
The proof borrows the following result from \cite[Equation (4.23i) -- (4.23iii)]{kunneman}, which itself comes from \cite{papanicolaou1979boundary} in the continuous setting: there exists a random field $H(y)=(H_1(y),\ldots,H_d(y))$ on $\Z^d$ that satisfies the following properties:
\begin{itemize}
\item $H_j(0)=0$ and $\sum_{j=1}^d \nabla_j H_j(y)=F_2(y), y\in\Z^d$,
\item by extending $H(y)$ from $\Z^d$ to $\R^d$ such that $H(x)=H([x])$ for any $x\in\R^d$ with $[x]$ the integer part of $x$, we have
\begin{equation}
\la |\eps H(\frac{x}{\eps})|^2\ra\to 0
\label{eq:sub1}
\end{equation}
as $\eps\to 0$, and 
\begin{equation}
\la |H(x)|^2\ra \les 1+|x|^2.
\label{eq:sub2}
\end{equation}
\end{itemize}

Now we only need to apply the same proof as for \emph{Group II} to conclude 
\[
\|w_\eps\|_{2,\eps}=o(\eps).
\]

\subsubsection{Group IV} This is the key part where we show the deterministic bias vanishes in the order of $\eps$. We consider $w_\eps$ satisfying 
\[
(\alpha+\nabla_\eps^*a(\frac{x}{\eps})\nabla_\eps)w_\eps=\eps F_1(x),
\]
where $F_1:\eps\Z^d\to \R$ is some fast decaying deterministic function. If we define $z_0$ satisfying 
\[
(\alpha+\nabla_\eps^*a_{\hom}\nabla_\eps)w_0=\eps F_1(x),
\]
it is clear by the standard homogenization result that $
\|w_\eps-w_0\|_{2,\eps} =o(\eps)$.

Now we claim that in our specific case the deterministic bias $\|w_0\|_{2,\eps}=o(\eps)$, which implies $\|w_\eps\|_{2,\eps}=o(\eps)$.

First we note that 
\begin{equation}
\begin{aligned}
(\alpha+\nabla_\eps^* a_{\hom}\nabla_\eps) w_0=&-\la J_3\ra-\la K_3\ra\\
=&-\eps\sum_{i,j=1}^d  \nabla_{\eps,i}^*\nabla_{\eps,i}\nabla_{\eps,j}u_0(x-\eps e_j)\la a_i\phi_{e_j}\ra\\
&-\eps\sum_V\sum_{k=1}^d \nabla_{\eps,k}^* F_V(x+\eps e_k)\la a_k\nabla_k\psi_{2,V}\ra,
\end{aligned}
\label{eq:w0}
\end{equation}
and by recalling $u_2(x,y)$ given by 
\[
\begin{aligned}
u_2(x,y)=\sum_VF_V(x)\psi_{2,V}(y)=&\sum_{i,j=1}^d \nabla_{\eps,i}^*\nabla_{\eps,j} u_0(x-\eps e_j+\eps e_i)\psi_{2,i,j}(y)\\
+&\sum_{i,j=1}^d \nabla_{\eps,i}\nabla_{\eps,j}u_0(x-\eps e_j-\eps e_i) \tilde{\psi}_{2,i,j}(y),
\end{aligned}
\]
we can write the second term on the r.h.s. of \eqref{eq:w0} as 
\[
\begin{aligned}
&\sum_V\sum_{k=1}^d \nabla_{\eps,k}^* F_V(x+\eps e_k)\la a_k\nabla_k\psi_{2,V}\ra\\
=&\sum_{k,i,j=1}^d \nabla_{\eps,k}^*\nabla_{\eps,i}^*\nabla_{\eps,j} u_0(x-\eps e_j+\eps e_i+\eps e_k) \la a_k\nabla_k\psi_{2,i,j}\ra\\
+&\sum_{k,i,j=1}^d \nabla_{\eps,k}^*\nabla_{\eps,i}\nabla_{\eps,j}u_0(x-\eps e_j-\eps e_i+\eps e_k)\la a_k\nabla_k\tilde{\psi}_{2,i,j}\ra.
\end{aligned}
\]

By Lemma~\ref{l:fide}, we have $\|w_0-\bar{w}_0\|_{2,\eps}=o(\eps)$ with $\bar{w}_0$ solving the corresponding equation in $\R^d$:
\begin{equation}
\begin{aligned}
\frac{1}{\eps}(\alpha-\nabla\cdot a_{\hom}\nabla)\bar{w}_0
&= \sum_{i,j=1}^d \partial_{x_ix_ix_j} \bar{u}_0(x)\la a_i\phi_{e_j}\ra\\
&- \sum_{k,i,j=1}^d \partial_{x_kx_ix_j}\bar{u}_0(x)\la a_k\nabla_k\psi_{2,i,j}\ra+ \sum_{k,i,j=1}^d \partial_{x_kx_ix_j}\bar{u}_0(x)\la a_k\nabla_k\tilde{\psi}_{2,i,j}\ra,
\end{aligned}
\label{eq:z0}
\end{equation}
where $\bar{u}_0$ satisfies 
\[
(\alpha-\nabla\cdot a_{\hom} \nabla)\bar{u}_0=f
\]
on $\R^d$. Now we show that r.h.s. of \eqref{eq:z0} is zero so $\bar{w}_0\equiv 0$. 

For the second term on the r.h.s. of \eqref{eq:z0}, we have
\[
\la a_k\nabla_k\psi_{2,i,j}\ra=\la \nabla_k^* a_k\psi_{2,i,j}\ra=-\la \nabla^* a\nabla \phi_{e_k}\psi_{2,i,j}\ra=-\la \phi_{e_k}\nabla^*a\nabla\psi_{2,i,j}\ra.
\]
By the equation satisfied by $\psi_{2,i,j}$ \eqref{eq:2ndcor1}, we have
\[
\la a_k\nabla_k\psi_{2,i,j}\ra=\la \phi_{e_k}a_i(1_{i=j}+\nabla_i\phi_{e_j})\ra=\la \phi_{e_k}a_i\ra1_{i=j}+\la \phi_{e_k}a_i\nabla_i\phi_{e_j}\ra.
\]

Similarly, for the third term on the r.h.s. of \eqref{eq:z0}, we have
\[
\la a_k\nabla_k\tilde{\psi}_{2,i,j}\ra=-\la\phi_{e_k}\nabla^*a\nabla\tilde{\psi}_{2,i,j}\ra=\la\phi_{e_k}\nabla_i^*(a_i\phi_{e_j})\ra=\la \nabla_i\phi_{e_k}a_i\phi_{e_j}\ra.
\] 
Therefore, we have 
\[
\begin{aligned}
\mbox{r.h.s. of \eqref{eq:z0}}=&\sum_{i,j=1}^d \partial_{x_ix_ix_j}\bar{u}_0(x)\la a_i\phi_{e_j}\ra-\sum_{k,i,j=1}^d \partial_{x_kx_ix_j}\bar{u}_0(x)\la\phi_{e_k} a_i\ra 1_{i=j}\\
-&\sum_{k,i,j=1}^d \partial_{x_kx_ix_j} \bar{u}_0(x)\la  \phi_{e_k}a_i\nabla_i\phi_{e_j}\ra+\sum_{k,i,j=1}^d \partial_{x_kx_ix_j}\bar{u}_0(x)\la \nabla_i\phi_{e_k}a_i\phi_{e_j}\ra\equiv0.
\end{aligned}
\]

To summarize, we have $\|w_\eps\|_{2,\eps}=o(\eps)$.

\subsubsection{Summary} 

Recall that $z_\eps(x)=u_\eps(x)-u_0(x)-\eps u_1(x,y)-\eps^2 u_2(x,y)$ and 
\[
\begin{aligned}
&(\alpha+\nabla_\eps^*a(\frac{x}{\eps})\nabla_\eps)z_\eps\\
=&-\eps\alpha u_1-\eps^2\alpha u_2-(J_3-\la J_3\ra)-K_1-K_2-(K_3-\la K_3\ra)-\la J_3\ra-\la K_3\ra.
\end{aligned}
\]

By the previous discussion on \emph{Group I,II,III,IV}, we have $\|z_\eps\|_{2,\eps}=o(\eps)$ and the discussion for \emph{Group I} already shows that
\[
\|\eps^2u_2\|_{2,\eps}=o(\eps),
\]
and this gives 
\[
\|u_\eps-u_0-\eps u_1\|_{2,\eps}=o(\eps).
\]
Now we only need to note that $\|u_1-\tilde{u}_1\|_{2,\eps}=O(\eps)$ with
\[
\tilde{u}_1(x,\frac{x}{\eps})=\sum_{j=1}^d \nabla_{\eps,j}u_0(x)\phi_{e_j}(\frac{x}{\eps})
\]
to complete the proof of Theorem~\ref{t:2scale}.

\section{Higher order fluctuations}
\label{s:high}

With the proof of Theorem~\ref{t:2scale}, it is clear that a similar approach will lead to a higher order expansions of $u_\eps$ in higher dimensions, e.g., when $d\geq 5$, we can include in the expansion the second order correctors (which are stationary by Theorem~\ref{t:mainTh}) and prove the remainder is of order $o(\eps^2)$. The calculation is more involved but the idea is the same. We first illustrate it in the continuous setting, then we present a proof of Theorem~\ref{t:highex} in the discrete setting.

\subsection{High order errors in the continuous setting}
\label{s:excon}

For fixed $n\geq 2$ and equations
\[
(\alpha-\nabla\cdot a(\frac{x}{\eps})\nabla) u_\eps(x)= f(x),\ \
(\alpha-\nabla\cdot a_{\hom}\nabla) u_0(x)=f(x),
\]
with $x\in\R^d,\alpha>0,f\in\C_c^\infty(\R^d)$, the goal is to obtain an expansion of $u_\eps-u_0$ up to order $\eps^n$ in dimension $d\geq 2n+1$. 

When $d\geq 2n+1$, the $n-$th order correctors are stationary by Theorem~\ref{t:mainTh}. To show the remainder is small, we need to construct up to $(n+1)$-th order correctors. It can be done as in \cite[Theorem 2]{papanicolaou1979boundary} since we have zero-mean stationary gradients.  

Now we consider 
\[
\mathscr{J}_n:=[\alpha-(\nabla_x+\frac{1}{\eps}\nabla_y)\cdot a(y) (\nabla_x+\frac{1}{\eps}\nabla_y)] (u_0+\eps u_1+\ldots+\eps^n u_n+\eps^{n+1}u_{n+1}), 
\]
and 
 construct $u_1,u_2, \{u_k\}_{3\leq k\leq n+1}$ satisfy the following equations:
\[
\nabla_y\cdot a(y)\nabla_y u_1+\nabla_y\cdot a(y)\nabla_x u_0=0,
\]
\[
\begin{aligned}
&\nabla_y\cdot a(y)\nabla_yu_2+\nabla_y\cdot a(y)\nabla_x u_1+\nabla_x\cdot a(y)\nabla_y u_1+\nabla_x \cdot a(y)\nabla_x u_0\\
=&\la \nabla_x\cdot a(y)\nabla_y u_1+\nabla_x \cdot a(y)\nabla_x u_0\ra,
\end{aligned}
\]
and
\begin{equation}
\begin{aligned}
&\nabla_y \cdot a(y)\nabla_y u_k+\nabla_y\cdot a(y)\nabla_x u_{k-1}+\nabla_x \cdot a(y)\nabla_y u_{k-1}+\nabla_x \cdot a(y)\nabla_x u_{k-2}-\alpha u_{k-2}\\
=&\la \nabla_x \cdot a(y)\nabla_y u_{k-1}+\nabla_x \cdot a(y)\nabla_x u_{k-2}\ra.
\end{aligned}
\label{eq:highcor1}
\end{equation}

\begin{rem}
Our definition of higher order correctors $u_k,k\geq 3$ in \eqref{eq:highcor1} is different from \eqref{eq:highcorEq} due to the extra term $\alpha u_{k-2}$, but it is clear that Theorem~\ref{t:mainTh} still applies in the corresponding discrete setting. 
\end{rem}

By the above construction, it is clear that
\begin{equation}
\begin{aligned}
\mathscr{J}_n=&\alpha u_0-\sum_{k=2}^{n+1}\eps^{k-2}\la \nabla_x \cdot a(y)\nabla_y u_{k-1}+\nabla_x \cdot a(y)\nabla_x u_{k-2}\ra\\
-&\eps^n\nabla_x\cdot a(y)\nabla_x u_n-\eps^{n}\nabla_x\cdot a(y)\nabla_y u_{n+1}-\eps^{n}\nabla_y \cdot a(y)\nabla_x u_{n+1}-\eps^{n+1}\nabla_x\cdot a(y)\nabla_x u_{n+1}\\
+& \alpha \eps^n u_n+\alpha \eps^{n+1} u_{n+1}.
\end{aligned}
\end{equation}

By defining 
\begin{equation}
z_\eps=u_\eps-u_0-\eps u_1-\ldots-\eps^{n+1}u_{n+1},
\label{eq:ex1}
\end{equation}
and the fact that $\alpha u_0-\la \nabla_x \cdot a(y)\nabla_y u_1+\nabla_x \cdot a(y)\nabla_x u_0\ra=f$, we derive
\begin{equation}
\begin{aligned}
(\alpha-\nabla\cdot a(\frac{x}{\eps})\nabla)z_\eps=&f-\mathscr{J}_n\\
=
&\sum_{k=3}^{n+2}\eps^{k-2}\la \nabla_x \cdot a(y)\nabla_y u_{k-1}+\nabla_x \cdot a(y)\nabla_x u_{k-2}\ra\\
&+\eps^n\nabla_x\cdot a(y)\nabla_x u_n-\la\eps^n\nabla_x\cdot a(y)\nabla_x u_n\ra\\
&+\eps^{n}\nabla_x\cdot a(y)\nabla_y u_{n+1}-\la\eps^{n}\nabla_x\cdot a(y)\nabla_y u_{n+1}\ra\\
&+\eps^{n}\nabla_y \cdot a(y)\nabla_x u_{n+1}+\eps^{n+1}\nabla_x\cdot a(y)\nabla_x u_{n+1}\\
&-\alpha \eps^n u_n-\alpha \eps^{n+1} u_{n+1},
\label{eq:exhigh}
\end{aligned}
\end{equation}
and the goal reduces to refine $z_\eps$ up to the order $\eps^n$. 

There are two types of sources on the r.h.s. of \eqref{eq:exhigh}:
\begin{itemize}
\item \emph{random sources}: by the same proof for \emph{Group I,II,III} in Section~\ref{s:p2scale}, we have the terms in the last four lines of the above display contribute $o(\eps^n)$ to $z_\eps$. Here we need a stationary $n-$th order corrector, and a sublinear $(n+1)$-th order corrector with a zero-mean stationary gradient.
\item \emph{deterministic sources}: we can write
\begin{equation}
\sum_{k=3}^{n+2}\eps^{k-2}\la \nabla_x \cdot a(y)\nabla_y u_{k-1}+\nabla_x \cdot a(y)\nabla_x u_{k-2}\ra=\sum_{k=1}^n \eps^k f_k(x)
\label{eq:desou}
\end{equation}
where $f_k$ is some linear combination of derivatives of $u_0$. For equations of the form
\[
(\alpha-\nabla\cdot a(\frac{x}{\eps})\nabla)w_\eps(x)=f_k(x),
\]
we can apply the expansion again, and derive equations of the form \eqref{eq:exhigh} to obtain an expansion of $w_\eps$. By iteration, we can go up to the order of $\eps^n$ in finite steps.
\end{itemize}

To summarize, when $d\geq 2n+1$, we expect there exists $\{u_k\}_{k=1}^n$ and $\{v_k\}_{k=1}^n$ such that $\la u_i\ra=0$, $v_i$ are deterministic, and
\begin{equation}
\left(\int_{\R^d}\bigg\la \bigg|u_\eps-u_0-\sum_{k=1}^n \eps^k u_k-\sum_{k=1}^n \eps^kv_k\bigg|^2\bigg\ra \, \d x\right)^{\frac12}=o(\eps^n).
\label{eq:highexre}
\end{equation}

It is worth mentioning that the $\{u_k\}_{k=1}^n$ appearing in \eqref{eq:highexre} is different from the ones in \eqref{eq:ex1}, since the deterministic source in \eqref{eq:desou} also contributes to the random error.

\subsection{Proof of Theorem~\ref{t:highex}}

For any $n\geq 2$, to expand up to $o(\eps^n)$, we need to construct the $(n+1)$-th correctors $\psi_{n+1}$. By Theorem~\ref{t:mainTh} and \cite[Theorem 3, Lemma 5]{kunneman}, this can be done when $d\geq 2n+1$, and we have a sublinear random field $\psi_{n+1}(y)$ with $\psi_{n+1}(0)=0$ and a zero-mean stationary gradient $\nabla\psi_{n+1}$. We can continue the calculation in Section~\ref{s:conu1u2} and construct higher order correctors $\{u_k\}_{3\leq k\leq n+1}$ as in Section~\ref{s:excon}. In the following we only discuss the case $n=2$ since it already includes all the ingredients of the proof. 

To construct the third order correctors, we recall \eqref{eq:zeps}
\begin{equation}
\begin{aligned}
&(\alpha+\nabla_\eps^*a(\frac{x}{\eps})\nabla_\eps)(u_\eps-u_0-\eps u_1-\eps^2u_2)\\
=&-\eps\alpha u_1-\eps^2\alpha u_2-(J_3-\la J_3\ra)-K_1-K_2-(K_3-\la K_3\ra)-\la J_3\ra-\la K_3\ra,
\label{eq:ex2nd}
\end{aligned}
\end{equation}
with 
\begin{eqnarray*}
J_3&=&\eps\sum_{i,j=1}^d  \nabla_{\eps,i}^*\nabla_{\eps,i}\nabla_{\eps,j}u_0(x-\eps e_j)a_i(y)\phi_{e_j}(y),\\
K_1&=&\eps^2\sum_V \sum_{i=1}^d \nabla_{\eps,i}^* \nabla_{\eps,i}F_V(x) a_i(y)\psi_{2,V}(y),\\
K_2&=&\eps \sum_V\sum_{i=1}^d\nabla_{\eps,i} F_V(x-\eps e_i)\nabla_i^*(a_i(y)\psi_{2,V}(y)),\\
K_3&=&\eps \sum_V\sum_{i=1}^d \nabla_{\eps,i}^* F_V(x+\eps e_i)a_i(y)\nabla_i\psi_{2,V}(y).
\end{eqnarray*}
Similar to $u_2$, we define $u_3$ to get rid of the zero-mean terms on the r.h.s. of \eqref{eq:ex2nd} of order $\eps$, which is written as  
\[
\eps\sum_U G_U(x)U(y)=-\eps \alpha u_1 -(J_3-\la J_3\ra) -K_2-(K_3-\la K_3\ra).
\]
Here $G_U$ is some derivative of $u_0$ (note that due to the term $-\eps \alpha u_1$, $G_U$ is not necessarily a \emph{third order} derivative of $u_0$) and the summation is over zero-mean stationary random fields
 \[
 U\in \{\phi_{e_i},a_i\phi_{e_j}-\la a_i\phi_{e_j}\ra,\nabla_i^*(a_i\psi_{2,V}),a_i\nabla_i\psi_{2,V}-\la a_i\nabla\psi_{2,V}\ra\}_{i,j=1,\ldots,d}.
\]

We define
\[
u_3(x,y)=\sum_U  G_U(x)\psi_{3,U}(y),
\]
with $\psi_{3,U}$ the corrector corresponding to $U$, i.e., 
\[
\nabla^*a(y)\nabla \psi_{3,U}(y)=U(y).
\]
 By the same calculation as in \eqref{eq:boca}, we have
\[
\begin{aligned}
\nabla_\eps^* a(y)\nabla_\eps  (\eps^3u_3)=&\eps^3\sum_U \sum_{i=1}^d \nabla_{\eps,i}^* \nabla_{\eps,i}G_U(x) a_i(y)\psi_{3,U}(y)\\
+&\eps^2 \sum_U\sum_{i=1}^d\nabla_{\eps,i} G_U(x-\eps e_i)\nabla_i^*(a_i(y)\psi_{3,U}(y))\\
+&\eps^2 \sum_U\sum_{i=1}^d \nabla_{\eps,i}^* G_U(x+\eps e_i)a_i(y)\nabla_i\psi_{3,U}(y)\\
+&\eps \sum_U\sum_{i=1}^d  G_U(x) \nabla_i^*(a_i(y)\nabla_i \psi_{3,U}(y)):=L_1+L_2+L_3+L_4.
\end{aligned}
\]
It is clear that $L_4=\eps\sum_U G_U(x)U(y)$, and by \eqref{eq:ex2nd}, we have 
\[
z_\eps=u_\eps-u_0-\eps u_1-\eps^2u_2-\eps^3u_3
\]
 satisfies
\[
\begin{aligned}
&(\alpha+\nabla_\eps^*a(\frac{x}{\eps})\nabla_\eps)z_\eps\\
=&-\eps^2\alpha u_2-\eps^3\alpha u_3-K_1-\la J_3\ra-\la K_3\ra-L_1-L_2-L_3.
\end{aligned}
\]
Similar to \eqref{eq:exhigh}, we can write 
\[
\begin{aligned}
(\alpha+\nabla_\eps^*a(\frac{x}{\eps})\nabla_\eps)z_\eps=&-\la J_3\ra-\la K_3\ra-\la K_1\ra-\la L_3\ra\\
&-K_1+\la K_1\ra -L_3+\la L_3\ra\\
&-L_1-L_2\\
&-\eps^2\alpha u_2-\eps^3\alpha u_3.
\end{aligned}
\]

For the random source in the last three lines of the above display, by the same proof as in Section~\ref{s:p2scale}, their contributions to $z_\eps$ is $o(\eps^2)$. 

For the deterministic source $-\la J_3\ra-\la K_3\ra-\la K_1\ra-\la L_3\ra$, it is of the form $\eps F_1(x)$, where $F_1(x),x\in\eps\Z^d$ is deterministic and fast decaying, so we can apply the same expansion again to refine the solution to 
\[
(\alpha+\nabla_\eps^*a(\frac{x}{\eps})\nabla_\eps)w_\eps=-\la J_3\ra-\la K_3\ra-\la K_1\ra-\la L_3\ra
\]
up to $o(\eps^2)$. More precisely, for the source $-\la K_1\ra-\la L_3\ra$, it is of order $O(\eps^2)$, so we can define $v_2$ as the solution to
\[
(\alpha+\nabla_\eps^*a_{\hom}\nabla_\eps)v_2=-(\la K_1\ra+\la L_3\ra)/\eps^2.
\]
For the source $-\la J_3\ra-\la K_3\ra$, it is of order $O(\eps)$. If we define $v_\eps$ as the solution to 
\[
(\alpha+\nabla_\eps^* a(\frac{x}{\eps}) \nabla_\eps)v_\eps= -(\la J_3\ra+\la K_3\ra)/\eps,
\]
then by Theorem~\ref{t:2scale}, $v_\eps$ can be refined up to order $o(\eps)$, i.e., let $v_1$ solve
\[
(\alpha+\nabla_\eps^* a_{\hom} \nabla_\eps)v_1= -(\la J_3\ra+\la K_3\ra)/\eps,
\]
which is the same as \eqref{eq:w0}, and there exists $v_{11}$ such that $\la v_{11}\ra=0$ 
 and
\[
\|v_\eps-v_1-\eps v_{11}\|_{2,\eps}=o(\eps).
\]
Now it is clear that 
\[
\|w_\eps-\eps v_1-\eps^2v_{11}-\eps^2 v_2\|_{2,\eps}=o(\eps^2).
\]

To summarize, we have shown that 
\[
\|u_\eps-u_0-\eps u_1-\eps^2u_2-\eps v_1-\eps^2v_{11}-\eps^2v_2\|_{2,\eps}=o(\eps^2),
\]
where $u_1,u_2,v_{11}$ are zero-mean random fluctuations and $v_1,v_2$ are deterministic bias. This completes the proof of Theorem~\ref{t:highex} for $n=2$. 



\section{Discussion: local vs global or strong vs weak random fluctuations}
\label{s:compa}

Theorem~\ref{t:2scale} shows that when $d\geq 3$, $\|u_\eps-u_0-\eps u_1\|_{2,\eps}=o(\eps)$ with the first order random fluctuations represented by 
\[
\eps u_1(x,\frac{x}{\eps})=\eps \sum_{j=1}^d \nabla_{\eps,j}u_0(x)\phi_{e_j}(\frac{x}{\eps}).
\]
It is consistent with \cite[Theorem 2.5]{gu-mourrat} where a pointwise version is proved in the continuous setting: for fixed $x\in \R^d$, 
\begin{equation}
u_\eps(x)=u_0(x)+\eps \sum_{j=1}^d \partial_{x_j}u_0(x)\phi_{e_j}(\frac{x}{\eps})+o(\eps),
\label{eq:point}
\end{equation}
with $o(\eps)/\eps\to0$ in $L^1(\Omega)$.

Neither of the two results implies the other. On one hand, it is not clear how to obtain the pointwise estimates by the analytic approach, in particular the energy estimate described in Section~\ref{s:p2scale}; on the other hand, the probabilistic approach used in \cite{gu-mourrat} loses track of the dependence of $o(\eps)$ in \eqref{eq:point} on $x\in\R^d$, so \eqref{eq:point} does not easily extend to an $L^2(\R^d\times \Omega)$ version.


Both results 
indicate that  $u_1(x,x/\eps)$ represents the first order random fluctuation measured in a strong sense. It is however not the case after a spatial average with respect to a test function as pointed out in \cite{gu-mourratmms}. Central limit theorems were derived for large scale fluctuations of $\eps u_1(x,x/\eps)$ and $u_\eps-\la u_\eps\ra$ in \cite{mourrat2014correlation,MN, gu-mourratmms,gloria2016random,armstrong2016scaling}. When $d\geq 3$, the result shows for any $g\in\C_c^\infty(\R^d)$ 
\begin{equation}
\frac{1}{\eps^{d/2}}\int_{\R^d} \eps u_1(x,\frac{x}{\eps})g(x)\,\d x\Rightarrow N(0,\sigma^2),
\label{eq:cltcor}
\end{equation}
\begin{equation}
\frac{1}{\eps^{d/2}}\int_{\R^d} (u_\eps(x)-\la u_\eps(x)\ra)g(x)\,\d x \Rightarrow N(0,\tilde{\sigma}^2),
\label{eq:cltso}
\end{equation}
and $\sigma^2\neq \tilde{\sigma}^2$. The mismatch between the two variances in \eqref{eq:cltcor} and \eqref{eq:cltso} suggests that the higher order correctors become visible in \eqref{eq:cltso}. When $d\geq 2n+1$, the $n-$th order correctors $\psi_n$ are stationary, and a covariance estimate gives
\[
\begin{aligned}
|\la \psi_n(0)\psi_n(x)\ra| \les& \sum_e \|\partial_e \psi_n(0)\|_2\|\partial_e \psi_n(x)\|_2\\
\les &\sum_e \frac{1}{|\un{e}|_*^{(d-n)-}}\frac{1}{|x-\un{e}|_*^{(d-n)-}}\les \frac{1}{|x|_*^{(d-2n)-}}.
\end{aligned}
\]
The scaling indicates that as a stationary random field, $\psi_n(x)$ decorrelates almost at the rate of $|x|^{-(d-2n)}$ (it was proved rigorously in \cite{mourrat2014correlation} for $n=1$). In other words, the higher order correctors have stronger correlations and the decay of correlation functions slows down as the order increases! It partly explains why we have contributions from high order correctors weakly in space. If we abuse the notation and consider
\[
I_n:=\frac{1}{\eps^{d/2}}\int_{\R^d} \eps^n \psi_n(\frac{x}{\eps}) g(x)dx,
\]
we have
\[
\la |I_n|^2\ra =\int_{\R^{2d}} \eps^{2n-d} \la \psi_n(\frac{x}{\eps})\psi_n(\frac{y}{\eps})\ra g(x)g(y)dxdy.
\]
If we assume $\la \psi_n(0)\psi_n(x)\ra\sim |x|^{-(d-2n)}$ as $|x|\to\infty$, the scaling indicates that $\la |I_n|^2\ra$ is of order $O(1)$, i.e., the $n-$th order corrector contributes weakly in space.

In the recent preprint \cite{gloria2016random}, the authors provided another way of understanding the mismatch between \eqref{eq:cltcor} and \eqref{eq:cltso} in terms of the so-called homogenization commutator. It turns out that if the formal expansion applies to the gradient $\nabla u_\eps$, it leads to the correct large scale random fluctuation; see \cite[Page 6]{gloria2016random}. The expansion on the gradient of the Green's function in \cite[Proposition 4.2]{gu-mourratmms} shares the same spirit. It is worth mentioning that a nice non-local expansion was proved in \cite[Corollary 2]{gloria2016random} which identifies a single term that captures the large scale fluctuation.

\appendix

\section{The existence of stationary correctors: removing the massive term}
\label{s:exst}

The goal in this section is to show that for the corrector equation \eqref{eq:corndislambda} defined when $d\geq 2n-1$
\begin{equation}
(\lambda+\nabla^*a(y)\nabla) \psi_n^\lambda(y)=F(y)=\left\{\begin{array}{l}
\nabla_i^* (a_i(y)\psi_{n-1}(y)),\\
a_i(y) \nabla_i\psi_{n-1}(y)-\la a_i(y)\nabla_i \psi_{n-1}(y)\ra,\\
a_{i}(y)\psi_{n-2}(y)-\la a_{i}(y)\psi_{n-2}(y)\ra,
\end{array}
\right.
\end{equation}
the uniform estimates $\|\psi_n^\lambda\|_p\les 1$ and $\|\nabla\psi_n^\lambda\|_p\les 1$ implies the existence of a stationary corrector and a stationary gradient, respectively. 

In Section~\ref{s:q2}, we proved that when $d\geq 2n-1$, there exists a stationary random field $\Psi$ such that $\Psi(0)\in L^2(\Omega)$ and 
\[
F=\nabla^* \Psi.
\]
The rest is standard, and we present it for the convenience of the reader. 

We first introduce some notations. Let $\omega=(\omega_e)_{e\in\B}\in \Omega$ denote the sample point, and for $x\in \Z^d$, we define the shift operator $\tau_x$ on $\Omega$ by $(\tau_x \omega)_e=\omega_{x+e}$, where $x+e:=(x+\underline{e},x+\bar{e})$ is the edge obtained by shifting $e$ by $x$. Since $\{\omega_e\}_{e\in\mathbb{B}}$ are i.i.d., $\{\tau_x\}_{x\in \Z^d}$ is a group of measure-preserving transformations. We can define the operator 
\[
T_x f(\zeta)=f(\tau_x\zeta)
\]
for any measurable function $f$ on $\Omega$, and the generators of $T_x$, denoted by $\{D_i\}_{i=1}^d$, are defined by $D_if:=T_{e_i}f-f$. The adjoint $D_i^*$ is defined by $D_i^* f:=T_{-e_i}f-f$. We denote the gradient on $\Omega$ by $D=(D_1,\ldots,D_d)$ and the divergence $D^*F:=\sum_{i=1}^d D_i^*F_i$ for $F:\Omega\to \R^d$. 

\begin{lem}
Let $\phi_\lambda$ solve $(\lambda+\nabla^* a\nabla)\phi_\lambda=\nabla^*\Psi$ with $\Psi$ a stationary random field on $\Z^d$ such that $\Psi(0)\in L^2(\Omega)$. For the equation 
\begin{equation}
\nabla^*a\nabla\phi=\nabla^*\Psi,
\label{eq:corap}
\end{equation}

(i) there exists a random field $\phi$ solving \eqref{eq:corap} such that $\nabla\phi$ is stationary and $\nabla\phi(0)\in L^2(\Omega)$.

(ii) if $\la |\phi_\lambda|^2\ra \les1$, then there exists a stationary random field $\phi$ solving \eqref{eq:corap} such that $\phi(0)\in L^2(\Omega)$.
\label{l:stacor}
\end{lem}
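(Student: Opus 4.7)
The plan is, for both parts, to apply weak compactness in $L^2(\Omega)$ to the regularized corrector $\phi_\lambda$ (or its gradient) and pass to the limit $\lambda\to 0$. The three ingredients I need are: stationarity of $\phi_\lambda$ (built into the regularized problem by uniqueness), a uniform energy estimate, and the observation that the discrete curl-free identity $D_i\nabla_j \phi_\lambda = D_j\nabla_i\phi_\lambda$ is a closed linear constraint that is preserved under weak $L^2$ limits.

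For part (i), I would first note that $\phi_\lambda$ is stationary, $\phi_\lambda(x,\omega)=\phi_\lambda(0,\tau_x\omega)$, by uniqueness of the $\ell^2$-solution together with translation invariance of the law of $(a,\Psi)$. Testing $(\lambda+\nabla^*a\nabla)\phi_\lambda=\nabla^*\Psi$ against $\phi_\lambda$ and using stationarity to convert sums over $\Z^d$ into expectations at the origin gives
\[
\lambda\la\phi_\lambda(0)^2\ra+\la \nabla\phi_\lambda(0)\cdot a(0)\nabla\phi_\lambda(0)\ra = -\la \Psi(0)\cdot\nabla\phi_\lambda(0)\ra,
\]
from which ellipticity ($a_i\ge\delta>0$) and Cauchy--Schwarz yield the uniform bounds $\|\nabla\phi_\lambda(0)\|_2\les \|\Psi(0)\|_2$ and $\sqrt{\lambda}\,\|\phi_\lambda(0)\|_2\les \|\Psi(0)\|_2$. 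Extract a subsequence along which $\nabla\phi_\lambda(0)\rightharpoonup \chi$ weakly in $L^2(\Omega;\R^d)$. The identity $D_i\nabla_j\phi_\lambda(0)=D_j\nabla_i\phi_\lambda(0)$, valid at the level of any $L^2(\Omega)$ function, passes to the weak limit, so $\chi=(\chi_1,\ldots,\chi_d)$ satisfies $D_i\chi_j=D_j\chi_i$. This compatibility relation makes the path-wise construction
\[
\phi(0,\omega):=0,\qquad \phi(x+e_i,\omega)-\phi(x,\omega):=\chi_i(\tau_x\omega)
\]
independent of the chosen path from $0$ to $x$ in $\Z^d$, so $\phi$ is well defined and $\nabla\phi$ is a stationary field with $\nabla\phi(0)=\chi\in L^2(\Omega;\R^d)$. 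To check the equation, test $(\lambda+\nabla^*a\nabla)\phi_\lambda=\nabla^*\Psi$ against an arbitrary compactly supported deterministic $\eta:\Z^d\to\R$; the mass term vanishes in the limit since $\|\lambda\phi_\lambda(0)\|_2 = \sqrt{\lambda}\cdot(\sqrt{\lambda}\|\phi_\lambda(0)\|_2)\to 0$, and the remaining terms pass to the limit by the weak convergence of $\nabla\phi_\lambda(0)$, yielding $\nabla^*a\nabla\phi=\nabla^*\Psi$.

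For part (ii), the extra hypothesis $\la|\phi_\lambda(0)|^2\ra\les 1$ lets me extract a weak subsequential limit $\phi_\lambda(0)\rightharpoonup \phi_0$ in $L^2(\Omega)$ directly, and I extend stationarily by $\phi(x,\omega):=\phi_0(\tau_x\omega)$. Since $\phi_\lambda(0)$ is now uniformly bounded in $L^2$, the mass term $\lambda\phi_\lambda\to 0$ strongly, and passing to the limit in the tested equation exactly as above gives $\nabla^*a\nabla\phi=\nabla^*\Psi$ with $\phi$ stationary and $\phi(0)\in L^2(\Omega)$. The main (and really only nontrivial) obstacle in the whole argument is in part (i): realizing the weak limit of stationary gradients as the honest gradient of a single-valued random function on $\Z^d$. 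This is resolved by the curl-freeness observation above --- a closed linear condition preserved under weak limits --- which is the discrete analogue of the classical Helmholtz/Papanicolaou--Varadhan construction. Everything else is a routine weak-compactness argument.
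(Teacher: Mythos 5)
Your proposal is correct and follows essentially the same approach as the paper. For part (ii) the two arguments are virtually identical: lift to the probability space, use the uniform $L^2(\Omega)$ bound to extract a weakly convergent subsequence, note that the mass term $\lambda\tilde\phi_\lambda\to 0$, and pass to the limit in the weak formulation. For part (i) the paper simply cites Künnemann's Theorem~3; your curl-free/path-integration construction of $\phi$ from the weak limit $\chi$ of $\nabla\phi_\lambda(0)$ is precisely the content of that cited theorem (the discrete Papanicolaou--Varadhan construction), so you are spelling out the reference rather than taking a genuinely different route. One cosmetic point: when you ``test against an arbitrary compactly supported deterministic $\eta$,'' the resulting identity should be read as an equation in $L^2(\Omega)$ (i.e.\ pass to the weak $L^2(\Omega)$ limit for each fixed $\eta$ and use uniqueness of weak limits, then quantify over a countable dense family of $\eta$'s), not merely as an identity after taking expectations; the paper avoids this subtlety by testing directly against $G\in L^2(\Omega)$ on the lifted equation. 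As written your argument admits this correct reading, so it is fine, but it is worth being explicit.
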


\begin{proof}
Part (i) comes from \cite[Theorem 3]{kunneman}. For Part (ii), let $\tilde{a}(\omega)=(\omega_{e_1},\ldots,\omega_{e_d})$ and $\tilde{\Psi}\in L^2(\Omega)$ so that $\Psi(x)=\tilde{\Psi}(\tau_x\omega)$, we lift the equation to the probability space
\[
(\lambda+D^*\tilde{a} D)\tilde{\phi}_\lambda=D^*\tilde{\Psi},
\]
so it is clear that $\phi_\lambda(x)=\tilde{\phi}_\lambda(\tau_x\omega)$. Since $\la |\tilde{\phi}_\lambda|^2\ra\les 1$, we can extract a subsequence $\tilde{\phi}_\lambda\to \tilde{\phi}$ weakly in $L^2$, which implies $D\tilde{\phi}_\lambda\to D\tilde{\phi}$ weakly in $L^2(\Omega)$. 

For any $G\in L^2(\Omega)$, we have
\[
\lambda\la \tilde{\phi}_\lambda G\ra+\la \tilde{a}D\tilde{\phi}_\lambda DG\ra=\la D^*\tilde{\Psi}G\ra,
\]
and sending $\lambda\to 0$ leads to 
\[
\la \tilde{a}D\tilde{\phi}DG\ra=\la D^*\tilde{\Psi}G\ra,
\]
so $D^*\tilde{a}D\tilde{\phi}=D^*\tilde{\Psi}$. Now we define $\phi(x)=\tilde{\phi}(\tau_x\omega)$, and it is clear that 
\[
\nabla^*a\nabla\phi=\nabla^*\Psi.
\]
The proof is complete.
\end{proof}

\section{Finite difference approximation}

The following are some classical results and we present it for the convenience of the reader. We recall the classical Green's function estimates: 
\begin{equation}
\G^\alpha(x,y)\les \frac{e^{-c\sqrt{\alpha}|x-y|}}{|x-y|_*^{d-2}}
\label{eq:gres}
\end{equation}
for some $c>0$, with $\G^\alpha$ the Green's function of $\alpha+\nabla^*\nabla$ on $\Z^d$. Let $\G_\eps^\alpha(x,y)$ be the Green's function of $\alpha+\nabla_\eps^*\nabla_\eps$ on $\eps\Z^d$, it is clear that 
\[
\G_\eps^\alpha(x,y)=\eps^2\G^{\eps^2 \alpha}(\frac{x}{\eps},\frac{y}{\eps}).
\]

\begin{lem}
Let $u_\eps$ solve
\[
(\alpha+\nabla_\eps^*\nabla_\eps)u_\eps=f_\eps, 
\]
on $\eps\Z^d$, with $f_\eps$ satisfying (i) there exists $\lambda>0$ s.t. $|f_\eps (x)|\les e^{-\lambda|x|}$, (ii) all discrete derivatives of $f_\eps$ satisfies (i), then for $v_\eps=u_\eps$ or any derivative of $u_\eps$, 
 we have 
 \[
 |v_\eps (x)|\les e^{-c|x|}
 \]
 for constant $c>0$ independent of $\eps>0,x\in\eps\Z^d$.
 \label{l:de}
\end{lem}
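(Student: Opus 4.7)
The plan is to use the Green's function representation combined with $\eps$-uniform exponential decay of the discrete Green's function to transfer the exponential decay of $f_\eps$ to $u_\eps$; the case of derivatives then reduces to the case of $u_\eps$ itself by commuting finite differences through the operator.

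First I would write $u_\eps(x)=\sum_{y\in\eps\Z^d}\G_\eps^\alpha(x,y) f_\eps(y)$, where $\G_\eps^\alpha$ is the Green's function of $\alpha+\nabla_\eps^*\nabla_\eps$ on $\eps\Z^d$. From the scaling $\G_\eps^\alpha(x,y)=\eps^2\G^{\eps^2\alpha}(x/\eps,y/\eps)$ together with the classical bound \eqref{eq:gres}, I would obtain the $\eps$-uniform estimate
\[
\G_\eps^\alpha(x,y)\les \frac{\eps^2\, e^{-c\sqrt{\alpha}|x-y|}}{(|x-y|/\eps+2)^{d-2}}.
\]

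Next, I fix $c'\in(0,\min(c\sqrt{\alpha},\lambda))$ and exploit the triangle inequality $|x|\leq|x-y|+|y|$ to extract a factor $e^{-c'|x|}$ from $e^{-c\sqrt{\alpha}|x-y|}e^{-\lambda|y|}$. What remains is a sum of the form
\[
\sum_{y\in\eps\Z^d}\frac{\eps^2\, e^{-(c\sqrt{\alpha}-c')|x-y|}}{(|x-y|/\eps+2)^{d-2}}\,e^{-(\lambda-c')|y|},
\]
which I expect to be bounded uniformly in $\eps$ and $x$. To verify this I would split the sum according to $|x-y|\leq\eps$ versus $|x-y|\geq\eps$ and compare with the convergent integral $\int_{\R^d}|z|^{-(d-2)}e^{-\beta|z|}\,\d z$ (finite for any $\beta>0$); this step amounts to showing that $\G_\eps^\alpha$, weighted by an exponential, is the kernel of an operator bounded uniformly in $\eps$. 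The result is $|u_\eps(x)|\les e^{-c'|x|}$.

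Finally, for a discrete derivative $v_\eps=Du_\eps$ with $D$ any word in $\{\nabla_{\eps,i},\nabla_{\eps,i}^*\}_{i=1,\ldots,d}$, I would observe that finite differences on $\eps\Z^d$ commute with one another and with $\alpha+\nabla_\eps^*\nabla_\eps$, so $v_\eps$ solves the same equation with source $Df_\eps$ in place of $f_\eps$. Hypothesis (ii) gives $|Df_\eps(x)|\les e^{-\lambda|x|}$ uniformly in $\eps$, and the previous two steps applied to $v_\eps$ deliver $|v_\eps(x)|\les e^{-c|x|}$. The only point that requires care is the $\eps$-uniformity of the discrete convolution bound, but this follows routinely from \eqref{eq:gres} and the scaling relation, so I do not anticipate a serious obstacle.
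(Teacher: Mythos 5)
Your proposal follows essentially the same route as the paper: Green's function representation $u_\eps(x)=\sum_y\G_\eps^\alpha(x,y)f_\eps(y)$, the scaling relation $\G_\eps^\alpha(x,y)=\eps^2\G^{\eps^2\alpha}(x/\eps,y/\eps)$ together with \eqref{eq:gres} for an $\eps$-uniform kernel bound, and commutation of the finite-difference operators with $\alpha+\nabla_\eps^*\nabla_\eps$ to reduce derivatives to the base case. The only difference is cosmetic: you extract the exponential factor $e^{-c'|x|}$ via the triangle inequality before bounding the residual discrete convolution by comparison with $\int_{\R^d}|z|^{-(d-2)}e^{-\beta|z|}\,\d z$, whereas the paper splits the $y$-sum according to $|y|<|x|/2$ versus $|y|\geq|x|/2$; both yield the same uniform bound with no essential difference in effort.
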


\begin{proof}
It suffices to consider the case $v_\eps=u_\eps$. For derivatives, e.g., $v_\eps=\nabla_{\eps,i} u_\eps$, using the fact that $\nabla_{\eps,i}$ commutes with $\nabla_{\eps,j}$ and $\nabla_{\eps,j}^*$, we have
\[
(\alpha+\nabla_\eps^*\nabla_\eps)v_\eps=\nabla_{\eps,i}f_\eps,
\]
so we only need to apply the result when $v_\eps=u_\eps$. 

For $u_\eps(x)$, we use the Green's function representation
\[
u_\eps(x)=\sum_{y\in\eps\Z^d} \G^\alpha_\eps(x,y)f(y)=\sum_{y\in\eps\Z^d}\eps^2 \G^{\eps^2\alpha}(\frac{x}{\eps},\frac{y}{\eps})f(y),
\]
and by \eqref{eq:gres}, we have
\[
\eps^2 \G^{\eps^2\alpha}(\frac{x}{\eps},\frac{y}{\eps})\les \eps^2 \frac{e^{-\rho|x-y|}}{|\frac{x-y}{\eps}|_*^{d-2}}
\]
for some constant $\rho>0$. Thus 
\[
|u_\eps(x)|\les \eps^d\sum_{y\in \eps \Z^d, y\neq x} \frac{e^{-\rho |x-y|}}{|x-y|^{d-2}}e^{-\lambda|y|}+\eps^2e^{-\lambda|x|}.
\]
For the first term on the r.h.s. of the above expression, we only need to decompose the summation into $\sum_{|y|<|x|/2}$ and $\sum_{|y|\geq |x|/2}$ to complete the proof.
\end{proof}

\begin{lem}
Let $u_\eps $ solve 
\[
(\alpha+\nabla_\eps^* \nabla_\eps)u_\eps=f_\eps
\]
on $\eps\Z^d$, with $f_\eps$ satisfying (i) there exists $\lambda>0$ s.t. $|f_\eps(x)|\les e^{-\lambda |x|}$, (ii) there exists a continuous function $\bar{f}:\R^d\to \R$ such that $\|f_\eps-\bar{f}\|_{2,\eps}\to0$ as $\eps\to 0$, then for $\bar{u}$ solving 
\[
(\alpha-\Delta)\bar{u}=\bar{f}
\]
on $\R^d$, we have $\|u_\eps-\bar{u}\|_{2,\eps}\to0$ as $\eps\to 0$.
\label{l:fide}
\end{lem}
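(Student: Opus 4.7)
The statement is a classical finite-difference consistency-plus-stability result, and the argument I would give has three steps: a uniform discrete energy estimate, a consistency estimate for smooth compactly supported data obtained by Taylor expansion, and an approximation wrap-up reducing the general case to smooth data.

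\textbf{Stability.} Testing the discrete equation against $u_\eps$ with the weight $\eps^d$ yields
\[
\alpha\|u_\eps\|_{2,\eps}^2 + \|\nabla_\eps u_\eps\|_{2,\eps}^2 \le \|f_\eps\|_{2,\eps}\|u_\eps\|_{2,\eps},
\]
so $\|u_\eps\|_{2,\eps}\les \|f_\eps\|_{2,\eps}$ uniformly in $\eps$, and the analogous estimate on $\R^d$ gives $\|\bar u\|_{L^2(\R^d)}\les \|\bar f\|_{L^2(\R^d)}$. This stability property will be the only tool used to propagate smallness of residuals into smallness of the actual difference.

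\textbf{Consistency for smooth data.} Next, I would fix $g\in\C_c^\infty(\R^d)$ and let $v_\eps$, $\bar v$ be the discrete and continuous solutions with right-hand side $g$. Elliptic regularity together with the exponential decay of $\cG^\alpha$ applied to the compactly supported $g$ yield $\bar v\in C^\infty(\R^d)$ with $|\partial^\beta\bar v(x)|\les e^{-c|x|}$ for every multi-index $\beta$. A fourth-order Taylor expansion of $\bar v$ at each $x\in\eps\Z^d$ gives
\[
(\alpha+\nabla_\eps^*\nabla_\eps)\bar v(x)-g(x) = (\nabla_\eps^*\nabla_\eps+\Delta)\bar v(x) = O\bigl(\eps^2\,e^{-c|x|}\bigr),
\]
whose $\|\cdot\|_{2,\eps}$-norm is of order $\eps^2$. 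Applying the stability estimate to $v_\eps-\bar v|_{\eps\Z^d}$ then produces $\|v_\eps-\bar v\|_{2,\eps}=O(\eps^2)$.

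\textbf{Approximation wrap-up.} Given $\delta>0$, I would pick $g\in\C_c^\infty(\R^d)$ with $\|\bar f-g\|_{L^2(\R^d)}<\delta$; continuity and decay of $\bar f-g$, together with assumption (i) on $f_\eps$, ensure $\|\bar f-g\|_{2,\eps}\les\delta$ once $\eps$ is small. Triangle inequality and the stability estimate give
\[
\|u_\eps-\bar u\|_{2,\eps} \le \|u_\eps-v_\eps\|_{2,\eps} + \|v_\eps-\bar v\|_{2,\eps} + \|\bar v-\bar u\|_{2,\eps},
\]
where the first term is $\les \|f_\eps-g\|_{2,\eps}\le \|f_\eps-\bar f\|_{2,\eps}+\|\bar f-g\|_{2,\eps}\les\delta$ for small $\eps$, the second is $O(\eps^2)$ by the consistency step, and the third tends to $\|\bar v-\bar u\|_{L^2(\R^d)}\les\|g-\bar f\|_{L^2(\R^d)}\les\delta$ by Riemann-sum convergence applied to the continuous, exponentially-decaying function $\bar v-\bar u$. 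Letting first $\eps\to 0$ and then $\delta\to 0$ finishes the argument. The main technical point is justifying the Riemann-sum passage for $\|\bar v-\bar u\|_{2,\eps}$: it works because $\bar f-g$ is bounded with exponential decay (assumption (i) transfers to $\bar f$ via continuity plus the $L^2_\eps$ convergence in assumption (ii)), so convolution against $\cG^\alpha$ keeps $\bar v-\bar u$ continuous and exponentially decaying, which is enough to control the discrete sum by the continuous integral.
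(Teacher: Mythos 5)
Your argument is correct, but it follows a genuinely different path from the paper's. The paper works entirely through Green's function representations: Step~1 replaces $f_\eps$ by $\bar f$ inside the discrete convolution with $\G^{\eps^2\alpha}$; Step~2 replaces the discrete Green's function by the continuous one using the Conlon--Spencer estimate $|\G^\lambda-\cG^\lambda|\les e^{-c\sqrt\lambda|x-y|}/|x-y|_*^{d-1}$; Step~3 passes from the Riemann sum to the integral by dominated convergence, invoking Lemma~\ref{l:de} for the uniform decay. You instead run the textbook finite-difference scheme: a discrete energy (stability) bound, a Taylor-expansion consistency estimate of order $\eps^2$ for data $g\in\C_c^\infty$, and a density argument to reduce the general $\bar f$ to smooth $g$. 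Both are valid. Your route avoids the somewhat specialized Green's function comparison and exposes the underlying stability-plus-consistency structure, at the cost of the detour through smooth data and a few points worth being careful about: (a) the transfer of the exponential bound from $f_\eps$ to $\bar f$ (the paper states this at the outset; it does follow by the local contradiction argument you gesture at, using continuity and the $L^2_\eps$ convergence, and is needed both to put $\bar f\in L^2(\R^d)$ and to justify the Riemann-sum passages); (b) the Riemann-sum convergence $\|h\|_{2,\eps}\to\|h\|_{L^2(\R^d)}$ for continuous $h$ with exponential decay, which you use twice; and (c) the Taylor remainder bound, where the $O(\eps^2 e^{-c|x|})$ constant must be uniform, which indeed holds because $\partial^\beta\bar v=\cG^\alpha\ast\partial^\beta g$ decays exponentially uniformly over all $\beta$ of order at most $4$. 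The paper's version also gets by without the $\eps^2$ consistency rate (only $o(1)$ is needed), whereas you obtain the stronger rate for free for smooth data.
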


\begin{proof}
We first note that $|\bar{f}(x)|\les e^{-\lambda|x|}$ for $x\in\R^d$. The rest of the proof is decomposed into three steps.

\emph{Step 1.} We write 
\[
u_\eps(x)=\sum_{y\in\eps\Z^d} \eps^2 \G^{\eps^2\alpha}(\frac{x}{\eps},\frac{y}{\eps})f_\eps(y),
\]
and define
\[
\bar{u}_\eps(x):=\sum_{y\in\eps\Z^d} \eps^2 \G^{\eps^2\alpha}(\frac{x}{\eps},\frac{y}{\eps})\bar{f}(y)
\]
on $\eps\Z^d$. The goal is to show $\|u_\eps-\bar{u}_\eps\|_{2,\eps}\to 0$ as $\eps\to 0$. By \eqref{eq:gres}, we deduce
\[
\begin{aligned}
|u_\eps(x)-\bar{u}_\eps(x)|\les& \sum_{y\in\eps\Z^d} \eps^2 \frac{e^{-\rho|x-y|}}{|\frac{x-y}{\eps}|_*^{d-2}}|f_\eps(y)-\bar{f}(y)|\\
\les&\eps^d \sum_{y\in\eps\Z^d,y\neq x}  \frac{e^{-\rho|x-y|}}{|x-y|^{d-2}}|f_\eps(y)-\bar{f}(y)|+\eps^2|f_\eps(x)-\bar{f}(x)|,
\end{aligned}
\]
thus
\[
\begin{aligned}
&\eps^d\sum_{x\in\eps\Z^d} |u_\eps(x)-\bar{u}_\eps(x)|^2 \\
\les &\eps^d \sum_{x\in\eps\Z^d} \left( \sum_{y\in\eps\Z^d,y\neq x} \eps^d\frac{e^{-\rho|x-y|}}{|x-y|^{d-2}} \sum_{y\in\eps\Z^d,y\neq x}\eps^d\frac{e^{-\rho|x-y|}}{|x-y|^{d-2}}|f_\eps(y)-\bar{f}(y)|^2\right)\\
+&\eps^d\eps^4\sum_{x\in\eps\Z^d}|f_\eps(x)-\bar{f}(x)|^2:=I_1+I_2
\end{aligned}
\]
It is clear by assumption that $I_2\to 0$, and  
\[
\begin{aligned}
I_1\les& \eps^d \sum_{x\in\eps\Z^d}\sum_{y\in\eps\Z^d,y\neq x}\eps^d\frac{e^{-\rho|x-y|}}{|x-y|^{d-2}}|f_\eps(y)-\bar{f}(y)|^2\\
\les &\eps^d \sum_{y\in\eps\Z^d}|f_\eps(y)-\bar{f}(y)|^2\to 0
\end{aligned}
\]
as $\eps\to 0$.

\emph{Step 2.} Define 
\[
\tilde{u}_\eps(x)=\eps^d\sum_{y\in\eps\Z^d,y\neq x} \cG^\alpha(x,y)\bar{f}(y)
\]
on $\eps\Z^d$, with $\cG^\alpha$ the continuous Green's function of $\alpha-\Delta$ in $\R^d$. The goal is to show $\|\bar{u}_\eps-\tilde{u}_\eps\|_{2,\eps}\to 0$ 
as $\eps\to 0$. We first have
\[
|\bar{u}_\eps(x)-\tilde{u}_\eps(x)|\les \sum_{y\in\eps\Z^d,y\neq x} |\eps^2 \G^{\eps^2\alpha}(\frac{x}{\eps},\frac{y}{\eps})-\eps^2\cG^{\eps^2\alpha}(\frac{x}{\eps},\frac{y}{\eps})||\bar{f}(y)|+\eps^2|\bar{f}(x)|.
\]
where we used the scaling property $\cG^{\eps^2\alpha}(x/\eps,y/\eps)=\eps^{d-2}\cG^\alpha(x,y)$. By \cite[Lemma 3.1]{conlon2}, 
\[
|\G^\lambda(x,y)-\cG^\lambda(x,y)|\les \frac{e^{-c \sqrt{\lambda}|x-y|}}{|x-y|_*^{d-1}}
\]
 for some $c>0$, so
\[
|\bar{u}_\eps(x)-\tilde{u}_\eps(x)|\les \sum_{y\in\eps\Z^d,y\neq x} \eps^{d+1} \frac{e^{-c\sqrt{\alpha}|x-y|}}{|x-y|^{d-1}}|\bar{f}(y)|+\eps^2|\bar{f}(x)| \les \eps e^{-c|x|}
\]
for some $c>0$, which implies $\|\bar{u}_\eps-\tilde{u}_\eps\|_{2,\eps}\to 0$.

\emph{Step 3.} We first extend $\tilde{u}_\eps(x)$ from $\eps\Z^d$ to $\R^d$ such that $\tilde{u}_\eps(x)=\tilde{u}_\eps([x]_\eps)$ with $[x]_\eps$ the $\eps-$integer part of $x$. Then we consider 
\[
\|\tilde{u}_\eps-\bar{u}\|_{2,\eps}^2=\eps^d\sum_{x\in\eps\Z^d} |\tilde{u}_\eps(x)-\bar{u}(x)|^2 =\int_{\R^d} |\tilde{u}_\eps(x)-\bar{u}([x]_\eps)|^2dx.
\]
It is clear that $\int_{\R^d} |\bar{u}([x]_\eps)-\bar{u}(x)|^2dx\to 0$ as $\eps\to 0$, so we only need to show 
\[
\int_{\R^d} |\tilde{u}_\eps(x)-\bar{u}(x)|^2dx\to 0.
\]
Since $\bar{u}(x)=\int_{\R^d} \cG^\alpha(x,y)\bar{f}(y)dy$, we have $\tilde{u}_\eps(x)\to \bar{u}(x)$ for $x\in\R^d$. Now by Lemma~\ref{l:de}, $|\tilde{u}_\eps(x)|+|\bar{u}(x)|\les e^{-c|x|}$ for some $c>0$, so by dominated convergence theorem, the proof is complete.
\end{proof}

\subsection*{Acknowledgments} We would like to thank the anonymous referee for several helpful suggestions. The author's research is partially funded by grant DMS-1613301 from the US National Science Foundation.

\def\cprime{$'$}


\end{document}